\newcommand{\zed}{\mathbb{Z}}
\newcommand{\Q}{\mathbb{Q}}
\newcommand{\fH}{\mathcal{H}}
\newcommand{\fC}{\mathcal{C}}
\newcommand{\bn}[2]{\genfrac{(}{)}{0pt}{}{#1}{#2}}
\newcommand{\ve}{\varepsilon}
\newcommand{\id}{\mathrm{id}}
\theoremstyle{plain}
\newtheorem{theorem}{Theorem}[section]
\newtheorem{lemma}[theorem]{Lemma}
\newtheorem{proposition}[theorem]{Proposition}
\newtheorem{corollary}[theorem]{Corollary}
\newtheorem{question}[theorem]{Question}
\theoremstyle{definition}
\newtheorem{definition}[theorem]{Definition}
\newtheorem{example}[theorem]{Example}
\newtheorem{acknowledgments}{Acknowledgments\ignorespaces}
\theoremstyle{remark}
\newtheorem{remark}[theorem]{Remark}
\numberwithin{equation}{section}
\begin{document}

\title{On the Hilbert Polynomial of the HOMFLYPT Homology}

\author{Hao Wu}

\thanks{The author was partially supported by NSF grant DMS-1205879.}

\address{Department of Mathematics, The George Washington University, Phillips Hall, Room 739, 801 22nd Street NW, Washington DC 20052, USA. Telephone: 1-202-994-0653, Fax: 1-202-994-6760}

\email{haowu@gwu.edu}

\subjclass[2010]{Primary 57M27}

\keywords{HOMFLYPT homology, Hilbert polynomial} 

\begin{abstract}
We prove that the degree of the Hilbert polynomial of the HOMFLYPT homology of a closed braid $B$ is $l-1$, where $l$ is the number of components of $B$. This controls the growth of the HOMFLYPT homology with respect to its polynomial grading. The Hilbert polynomial also reveals a link polynomial hidden in the HOMFLYPT polynomial.
\end{abstract}

\maketitle

\section{Introduction}\label{sec-intro}
 
The HOMFLYPT homology was introduced by Khovanov and Rozansky in \cite{KR2}. In the current paper, we study its Hilbert polynomial. In the following, we use the $\fH_0$ normalization of the HOMFLYPT homology in \cite{Wu-triple-trans}, which is slightly more symmetric than the original normalization in \cite{KR2}. We will review $\fH_0$ in Section \ref{sec-H-0} below. 

$\fH_0$ has three $\zed$-gradings:
\begin{itemize}
	\item the homological grading with degree function $\deg_h$,
	\item the $a$-grading with degree function $\deg_a$,
	\item the $x$-grading with degree function $\deg_x$.
\end{itemize}
$\fH_0$ also has a $\zed_2$-grading. But this $\zed_2$-grading is always equal to the parity of the $a$-grading. Denote by $\fH_0^{i,j,k}$ the homogeneous component of $\fH_0$ with homological grading $i$, $a$-grading $j$ and $x$-grading $k$. From the definition, we know that $\fH_0^{i,j,k}=0$ unless $j+k$ is even. We call the sum of the $a$-grading and $x$-grading the polynomial grading of $\fH_0$. Its degree function is $\deg_a+\deg_x$. The homogeneous component of $\fH_0$ with homological grading $i$, $a$-grading $j$ and polynomial grading $T$ is $\fH_0^{i,j,T-j}$, which is zero unless $T$ is even. The following lemma is a special case of Hilbert's Syzygy Theorem (Theorem \ref{thm-syzygy} below.)
 
\begin{lemma}\label{lemma-def-Hilbert-H-0}
For a closed braid $B$ and a pair $(i,j) \in \zed\times\zed$, there is a polynomial $P_{B,i,j}(T) \in \Q[T]$ such that $\dim_\Q \fH_0^{i,j,2T-j}(B) = P_{B,i,j}(T)$ for all large positive integer $T$. In other words, $P_{B,i,j}(T)$ is the Hilbert polynomial of the direct sum $\bigoplus_{T\in\zed} \fH_0^{i,j,2T-j}(B)$ with respect to the polynomial grading.

Since the homological and $a$-gradings of $\fH_0(B)$ are both bounded, $P_{B,i,j}(T)$ is the zero polynomial for all but finitely many pairs of $i$ and $j$. Thus, the Hilbert polynomial of $\fH_0(B)$ is the sum $P_B(T)=\sum_{(i,j) \in \zed\times\zed}P_{B,i,j}(T)$.
\end{lemma}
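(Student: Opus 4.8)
The plan is to deduce this lemma as a direct consequence of Hilbert's Syzygy Theorem (Theorem \ref{thm-syzygy}), applied to the finitely generated graded module structure underlying $\fH_0(B)$. The essential point is that $\fH_0(B)$, as the homology of a complex of graded modules over a polynomial ring (arising from the Soergel bimodule / matrix factorization construction of Khovanov and Rozansky), inherits the structure of a finitely generated graded module over $\Q[x_1,\dots,x_n]$ with respect to the $x$-grading, once the homological and $a$-gradings are fixed.

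First I would recall that for a fixed pair $(i,j)$, the direct sum $M_{i,j} := \bigoplus_{T\in\zed} \fH_0^{i,j,2T-j}(B)$ is a finitely generated graded module over the relevant polynomial ring. This finite generation is what licenses the invocation of Hilbert's Syzygy Theorem: the theorem guarantees that the function $T \mapsto \dim_\Q \fH_0^{i,j,2T-j}(B)$, which records the dimensions of the graded pieces, agrees with a polynomial $P_{B,i,j}(T) \in \Q[T]$ for all sufficiently large $T$. This is precisely the classical statement that the Hilbert function of a finitely generated graded module eventually coincides with its Hilbert polynomial, so the first assertion of the lemma follows immediately once the module-theoretic setup is in place. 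I would note that the reindexing by $2T-j$ is merely a linear reparametrization of the polynomial grading, chosen so that the argument $T$ ranges over integers (recall $\fH_0^{i,j,k}=0$ unless $j+k$ is even, so the nonzero pieces occur precisely at $k = 2T-j$), and such a substitution sends polynomials to polynomials without changing the conclusion.

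For the second assertion, I would invoke the boundedness of the homological and $a$-gradings of $\fH_0(B)$. Because $B$ is a fixed closed braid, its HOMFLYPT homology is supported in only finitely many homological degrees $i$ and finitely many $a$-degrees $j$; outside this finite window, $\fH_0^{i,j,k}(B) = 0$ identically, so the corresponding module $M_{i,j}$ is trivial and its Hilbert polynomial $P_{B,i,j}(T)$ is the zero polynomial. Hence only finitely many of the $P_{B,i,j}$ are nonzero, and the sum $P_B(T) = \sum_{(i,j)} P_{B,i,j}(T)$ is a finite sum of polynomials, hence itself a well-defined polynomial in $\Q[T]$. This $P_B$ is then the Hilbert polynomial governing the total growth of $\fH_0(B)$ with respect to the polynomial grading, summed over all homological and $a$-degrees.

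I expect the main obstacle to be not the algebra of Hilbert polynomials, which is standard, but rather the careful verification that $\fH_0(B)$ genuinely carries a finitely generated graded module structure over a polynomial ring to which Theorem \ref{thm-syzygy} applies. This requires unwinding the construction of $\fH_0$ from Section \ref{sec-H-0} to identify the polynomial ring acting via the $x$-grading and to confirm finite generation of each $M_{i,j}$; since this is stated to be a special case of the Syzygy Theorem, I anticipate that the author establishes the requisite module structure in the preceding exposition, making the present lemma essentially a translation of the general theorem into the grading conventions for $\fH_0$.
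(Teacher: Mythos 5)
Your proposal is correct and follows essentially the same route as the paper: the paper treats this lemma as an immediate special case of Hilbert's Syzygy Theorem (Theorem \ref{thm-syzygy}), with the requisite finitely generated, evenly graded module structure on $\fH_0(B)$ established later in Remark \ref{remark-poly-grading-even} and the proof of Corollary \ref{cor-Hilbert-at-most-l-1} (finite generation of $\fC_0(B)$ over the Noetherian ring $R$, hence of $\fH_0(B)$), exactly as you anticipate. Your observations that the substitution $k=2T-j$ is a harmless reparametrization and that boundedness of the homological and $a$-gradings kills all but finitely many $P_{B,i,j}$ match the paper's reasoning.
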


The following is the main result of this paper.

\begin{theorem}\label{thm-Hilbert-degree}
Let $B$ be a closed braid with $l$ components. Then $P_B(T)$ is a polynomial of degree $l-1$. 
\end{theorem}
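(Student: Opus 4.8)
The plan is to realize $P_B(T)$ as a measure of Krull dimension and then to compute that dimension. The $x$-variable attached to a marked point of $B$ acts on the complex computing $\fH_0(B)$, and on homology this action depends only on which component carries the marked point (as for finite $N$); write $X_c$ for the resulting operator associated to the $c$-th component. The $X_1,\dots,X_l$ commute and endow $\fH_0(B)$ with the structure of a graded module over $A:=\Q[X_1,\dots,X_l]$, each $X_c$ having polynomial grading $2$. This is exactly the module structure that makes Lemma \ref{lemma-def-Hilbert-H-0} an instance of Hilbert's Syzygy Theorem, so in particular $\fH_0(B)$ is finitely generated over $A$. Because the $A$-action preserves the homological and $a$-gradings, each fixed-$(i,j)$ slice $M_{i,j}:=\bigoplus_{T}\fH_0^{i,j,2T-j}(B)$ is itself a finitely generated graded $A$-module, and the standard theory gives $\deg P_{B,i,j}=\dim_A M_{i,j}-1$. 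Since every value $P_{B,i,j}(T)=\dim_\Q\fH_0^{i,j,2T-j}(B)$ is nonnegative, each nonzero $P_{B,i,j}$ has positive leading coefficient, so no cancellation occurs in $P_B=\sum_{i,j}P_{B,i,j}$ and $\deg P_B=\big(\max_{i,j}\dim_A M_{i,j}\big)-1$.

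The upper bound is now immediate: a finitely generated $A$-module has dimension at most $\dim A=l$, so $\deg P_B\le l-1$. The content of the theorem is therefore the reverse inequality, namely that some slice $M_{i,j}$ has full dimension $l$, equivalently that $\fH_0(B)$ is not annihilated by any nonzero element of the domain $A$. This is in turn equivalent to the nonvanishing of the generic localization
\[
\fH_0(B)\otimes_A K\neq 0,\qquad K:=\mathrm{Frac}(A).
\]

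To establish this, I would localize the complex $C^\bullet(B)$ of matrix factorizations computing $\fH_0(B)$; since localization is exact, $\fH_0(B)\otimes_A K=H\big(K\otimes_A C^\bullet(B)\big)$. The key observation is that when a crossing of $B$ is formed by two strands lying on \emph{distinct} components $c\neq c'$, the difference $X_c-X_{c'}$ is a nonzero element of the domain $A$, hence a unit in $K$. This unit trivializes a Koszul factor in the singular (wide-edge) resolution at that crossing, making its localized matrix factorization contractible; consequently the two-term complex of the crossing retracts over $K$ onto its oriented resolution, so the crossing can be undone. Undoing every inter-component crossing in this way should exhibit $K\otimes_A C^\bullet(B)$ as homotopy equivalent to the localized complex of the split union $B_1\sqcup\cdots\sqcup B_l$ of the components of $B$, whose homology is $\fH_0(B_1)\otimes_\Q\cdots\otimes_\Q\fH_0(B_l)$. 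The generic rank of $\fH_0(B)$ would then be $\prod_{c=1}^{l}r_c$, where $r_c$ is the rank of $\fH_0(B_c)$ over $\Q[X_c]$---positive because $\fH_0(B_c)$ is finitely generated over the principal ideal domain $\Q[X_c]$ yet infinite-dimensional over $\Q$, hence not torsion. Thus $\fH_0(B)\otimes_A K\neq0$ and $\dim_A\fH_0(B)=l$.

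I expect the localized trivialization of inter-component crossings to be the main obstacle. The subtle point is that the variable identifications producing the unit $X_c-X_{c'}$ arise only after the closure operation that defines $\fH_0$ from the open-braid complex, so one must verify that localization commutes with this closure and that the crossing-by-crossing contractions assemble coherently around the entire closed braid rather than merely locally. One must also preserve the genuine dichotomy between inter- and intra-component crossings: a self-crossing of a single component gives $X_c-X_c=0$, which is not a unit and whose matrix factorization does \emph{not} become contractible---correctly so, since a component may be knotted and $\fH_0(B_c)$ must retain that information. A conceivable alternative is to induct on the number of crossings using the skein long exact sequence and to check that localization preserves nonvanishing at each stage; but controlling the connecting maps over $K$ seems to need the same contractibility input, so I do not expect it to avoid the central difficulty.
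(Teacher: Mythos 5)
Your reduction of the theorem to a statement about torsion is correct, and your upper bound is exactly the paper's: Rasmussen's argument makes $\fH_0(B)$ a finitely generated graded module over $A=\Q[X_1,\dots,X_l]$, Hilbert's Syzygy Theorem bounds each $\deg P_{B,i,j}$ by $l-1$, and positivity of leading coefficients prevents cancellation in $P_B=\sum P_{B,i,j}$. It is also true that, given this, the theorem is equivalent to $\fH_0(B)\otimes_A K\neq 0$. The gap is in your proof of that nonvanishing: the key lemma you propose is false. First, the identification of the local edge variables with the component operators $X_c$ holds only on homology (it is produced by Rasmussen's null-homotopies), not at the chain level, so inverting $X_c-X_{c'}$ has no direct local effect on the complex. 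Second, even granting the identifications, the Koszul entries of the wide-edge factorization at an inter-component crossing, namely $x_s+x_t-x_i-x_j$ and $x_sx_t-x_ix_j$, are sent to $0$ by those identifications (the crossing exchanges the components, so the symmetric functions match), not to units; likewise the element $x_j-x_s$ appearing in $\chi^0\circ\chi^1\simeq(x_j-x_s)\id$ from Lemma \ref{lemma-def-chi} is a difference of variables on the \emph{same} strand of the crossing, hence is never made invertible by your localization. Concretely, for the Hopf link (closure of $\sigma_1^2$) the resolution with one wide edge has $d_{mf}$-homology free of rank $2$ over $\Q[x_1,x_2]$, so it does not become contractible over $K$; and a direct computation of the localized complex shows that what actually cancels is the \emph{oriented} resolution (against the ``parallel'' summand of the wide-edge homology, which splits over $K$ by the Chinese Remainder Theorem applied to $K[u]/((u-x_1)(u-x_2))$), while the surviving rank $2$ sits in the all-singular resolution in homological degree $-2$ --- the opposite of the retraction you assert. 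Your final rank count is also off: $\fH_0$ of the split union is not $\fH_0(B_1)\otimes_\Q\cdots\otimes_\Q\fH_0(B_l)$; by Lemma \ref{lemma-unlink} the $l$-component unlink already has generic rank $2^{l-1}$, not $1$.

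Whether a localization/link-splitting argument of this kind can be made to work for the undeformed HOMFLYPT homology is genuinely delicate (it is the sort of statement that in the literature requires deformed or $y$-ified theories), and your fallback via the skein long exact sequence needs the same false contractibility input, as you note. The paper sidesteps all of this by decategorifying: it proves the lower bound not for $P_B$ directly but for the signed sum $Q_B(1,T)=\sum_{i,j}(-1)^iP_{B,i,j}(T)$, which satisfies a skein relation and can be pushed through a Franks--Williams transverse computation tree; a $2$-adic bookkeeping of the terminal nodes shows the coefficient of $T^{l-1}$ in $Q_B(1,T)$ is congruent to $2^{l-1}$ modulo $2^l$, hence nonzero, and positivity of the leading coefficients of the $P_{B,i,j}$ transfers this to $\deg P_B\geq l-1$. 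If you want to salvage your approach, the statement you would need to prove is not that inter-component singular resolutions die over $K$, but that the localized cube complex retracts onto the ``crossed'' summands of its singular resolutions --- a substantially harder assembly problem than the one you describe.
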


There are two main ingredients in the proof of Theorem \ref{thm-Hilbert-degree}. First, an argument by Rasmussen in \cite{Ras-2-bridge} shows that $\fH_0(B)$ is a finitely generated module over the polynomial ring generated by the components of $B$. This implies that the degree of $P_B(T)$ is at most $l-1$. Second, the computation tree argument by Franks and Williams in \cite{FW} can be used to show that the degree of $P_B(T)$ is at least $l-1$.

For knots, we have the following corollary.

\begin{corollary}\label{cor-Hilbert-knot}
Let $K$ be a knot. Then there are non-negative integers $D_{i,j}$ such that 
\begin{itemize}
	\item $D_{i,j}=0$ for all but finitely many pairs of integers $i$ and $j$,
	\item $\dim_\Q \fH_0^{i,j,2T-j}(K) = D_{i,j}$ for large $T$,
	\item $\sum_{(i,j) \in \zed\times\zed} D_{i,j}$ is an odd number.
\end{itemize}
\end{corollary}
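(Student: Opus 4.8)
The plan is to derive Corollary \ref{cor-Hilbert-knot} directly from Theorem \ref{thm-Hilbert-degree} together with Lemma \ref{lemma-def-Hilbert-H-0}, specialized to the case $l=1$. Since a knot $K$ is a closed braid with exactly one component, Theorem \ref{thm-Hilbert-degree} tells us that $P_K(T)$ is a polynomial of degree $l-1 = 0$, that is, a nonzero constant. The first two bullet points should then follow almost formally from the structure already recorded in Lemma \ref{lemma-def-Hilbert-H-0}: for each pair $(i,j)$ the Hilbert polynomial $P_{K,i,j}(T)$ has degree at most $0$ (since its sum has degree $0$ and each summand is a Hilbert polynomial of a graded piece, hence eventually agrees with a dimension count that is bounded), so each $P_{K,i,j}(T)$ is itself a constant $D_{i,j} = \dim_\Q \fH_0^{i,j,2T-j}(K)$ for large $T$. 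Non-negativity of $D_{i,j}$ is immediate because it is a dimension. The finiteness statement is exactly the second paragraph of Lemma \ref{lemma-def-Hilbert-H-0}, which guarantees $P_{K,i,j}$ is the zero polynomial for all but finitely many $(i,j)$.

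First I would make the identification $\sum_{(i,j)} D_{i,j} = P_K(T)$ for large $T$ precise, using that $P_K = \sum_{(i,j)} P_{K,i,j}$ and that each $P_{K,i,j}$ is the constant $D_{i,j}$; since the sum is finite this gives $P_K(T) = \sum_{(i,j)} D_{i,j}$, a constant, consistent with degree $0$. The remaining and genuinely nontrivial claim is the third bullet: that this constant is an \emph{odd} integer. This parity statement does not follow from Theorem \ref{thm-Hilbert-degree} alone, which only controls the degree, so I would need an independent input pinning down the value of $P_K(T)$ modulo $2$.

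The natural approach to the parity is to relate the large-$T$ behavior of the total dimension to the graded Euler characteristic, i.e.\ to a specialization of the HOMFLYPT polynomial, and then invoke a known normalization. Concretely, I expect that the constant $\sum_{(i,j)} D_{i,j}$ can be computed as (a suitable coefficient or leading-term contribution of) the HOMFLYPT invariant of $K$ under the $\fH_0$ normalization, and that for a \emph{knot} this normalized invariant has an odd value because the HOMFLYPT polynomial of a knot, evaluated appropriately, is congruent to $1$ modulo $2$. An alternative, more structural route is to use that $\fH_0(K)$ is a finitely generated module over a polynomial ring in one variable (the single component of $K$, per the Rasmussen argument cited in the discussion after Theorem \ref{thm-Hilbert-degree}), so that the eventual constant dimension equals the rank of this module; I would then argue that this rank is forced to be odd, presumably by reducing to the unknot (whose $\fH_0$ has total rank $1$) and checking that the rank is a skein-invariant parity, unchanged by crossing changes and Reidemeister-type moves built into the computation tree of Franks and Williams.

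The hard part will be establishing the oddness rather than the degree count. Pinning down the exact constant and its residue modulo $2$ requires me to look inside the $\fH_0$ normalization and track how the total eventual dimension matches a specific evaluation of the HOMFLYPT polynomial, and then to quote the classical fact that this evaluation is odd for knots. I anticipate the cleanest argument identifies $\sum_{(i,j)} D_{i,j}$ with the rank of $\fH_0(K)$ as a module over the one-variable polynomial ring and shows this rank reduces modulo $2$ to the corresponding unknot value $1$; verifying that each local modification in the computation tree preserves this parity is the step most likely to demand care.
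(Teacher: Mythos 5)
Your handling of the first two bullets is correct and matches the paper: by Theorem \ref{thm-Hilbert-degree} with $l=1$, $P_K(T)$ has degree $0$; since each $P_{K,i,j}$ is a Hilbert polynomial and hence eventually non-negative, its leading coefficient is non-negative, so no top-degree cancellation can occur in the sum $P_K=\sum_{(i,j)}P_{K,i,j}$, forcing every $P_{K,i,j}$ to be a non-negative constant $D_{i,j}$; Lemma \ref{lemma-def-Hilbert-H-0} gives finiteness. The genuine gap is exactly where you flagged it: the oddness of $\sum_{(i,j)}D_{i,j}$. You sketch two routes but carry out neither, and neither works as stated. The paper's proof is short precisely because the hard work is already done elsewhere: writing $Q_K(1,T)=\sum_{(i,j)}(-1)^iD_{i,j}$ (constant in $T$ for a knot), it quotes the last paragraph of the proof of Lemma \ref{lemma-degree-Hilbert-lower-bound}, where the Franks--Williams computation-tree bookkeeping shows that the coefficient of $T^{l-1}$ in $Q_B(1,T)$ has the form $k/(l-1)!$ with $k\equiv 2^{l-1}\bmod 2^l$; for $l=1$ this says $Q_K(1,T)$ is odd, and $\sum_{(i,j)}D_{i,j}$ has the same parity as $\sum_{(i,j)}(-1)^iD_{i,j}$.

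Your route (b) -- ``the rank is a skein-invariant parity, unchanged by crossing changes\dots verifying that each local modification in the computation tree preserves this parity'' -- is not the right shape of argument. A Conway splitting does not turn one braid into another while preserving an invariant; it replaces one node by \emph{two} children, and the terminal nodes of the tree are unlinks $U^{\sqcup l_N}$ with possibly many components, contributing $\Phi_N\bigl(Q_{U^{\sqcup l_N}}\bigr)(1,T)$ where $Q_{U^{\sqcup l_N}}(1,T)=2^{l_N-1}\binom{T}{l_N-1}$. So parity cannot be checked move-by-move: what saves the argument in the paper is that the unique terminal node $N_0$ reached by always taking the crossing-switch branch contributes exactly $2^{l-1}/(l-1)!$ to the leading coefficient, while every other terminal node contributing in that degree has $l_N\geq l+1$ components and contributes $\pm 2^{l_N-1}/(l-1)!$, which is divisible by $2^l$; this mod-$2^l$ control of all the other terms is the content you would need to supply. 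Your route (a) has a separate unaddressed obstruction: the paper's $F_B$ is only a \emph{transverse} invariant (a negative stabilization multiplies it by $-\alpha^{-1}\xi^{-1}$), so identifying $Q_K(1)$ with a classical HOMFLYPT evaluation (e.g.\ the trivial specialization $P(a,a-a^{-1})=1$) requires inserting a writhe-dependent correction factor and checking a parity constraint on writhe minus braid index for knots, none of which appears in your sketch. As written, then, the proposal reduces the corollary to its only nontrivial claim but does not prove that claim.
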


\begin{figure}[ht]
\[
\xymatrix{
\setlength{\unitlength}{1pt}
\begin{picture}(60,55)(-30,-15)

\put(-20,0){\vector(1,1){40}}

\put(20,0){\line(-1,1){18}}

\put(-2,22){\vector(-1,1){18}}

\put(-4,-15){$B_+$}

\end{picture} && \setlength{\unitlength}{1pt}
\begin{picture}(60,55)(-30,-15)

\put(20,0){\vector(-1,1){40}}

\put(-20,0){\line(1,1){18}}

\put(2,22){\vector(1,1){18}}

\put(-4,-15){$B_-$}

\end{picture} && \setlength{\unitlength}{1pt}
\begin{picture}(60,55)(-30,-15)

\put(-20,0){\vector(0,1){40}}

\put(20,0){\vector(0,1){40}}

\put(-4,-15){$B_0$}

\end{picture}
}
\]
\caption{}\label{fig-skein}

\end{figure}

Let $F_B(\alpha,\xi)$ be the decategorification of $\fH_0(B)$. That is, 
\begin{equation}\label{eq-def-HOMFLYPT}
F_B(\alpha,\xi)=\sum_{(i,j,k)\in \zed^3} (-1)^i \alpha^j\xi^k\dim_\Q \fH_0^{i,j,k}(B)\in\Q[\alpha^{-1},\alpha,\xi^{-1}][[\xi]].
\end{equation}
This is the HOMFLYPT polynomial with the normalization:
\begin{equation}\label{eq-HOMFLYPT-normalization}
\begin{cases}
F_B(\alpha,\xi) \text{ is invariant under transverse Markov moves,} \\
\alpha^{-1} F_{B_+}(\alpha,\xi) - \alpha F_{B_-}(\alpha,\xi) = (\xi^{-1}-\xi)F_{B_0}(\alpha,\xi), \\
F_{B'}(\alpha,\xi)=-\alpha^{-1}\xi^{-1}F_B(\alpha,\xi), \\
F_U(\alpha,\xi) = \frac{\alpha^{-1}}{\xi^{-1}-\xi},
\end{cases}
\end{equation}
where 
\begin{itemize}
  \item transverse Markov moves are reviewed in Subsection \ref{subsec-trans} below,
	\item $B_+$, $B_-$ and $B_0$ are closed braids identical outside the part shown in Figure \ref{fig-skein},
	\item $B'$ is obtained from the closed braid $B$ by a negative stabilization,
	\item $U$ is the unknot with no crossings.
\end{itemize}

Consider the polynomial $F_B(\alpha\xi,\xi)$. The power of $\xi$ in this polynomial corresponds to the polynomial grading of $\fH_0(B)$ and is, therefore, always even. Thus, we can expand $F_B(\alpha\xi,\xi)$ as 
\begin{equation}\label{eq-c-T}
F_B(\alpha\xi,\xi) = \sum_{T\in \zed} c_{B,T}(\alpha) \xi^{2T},
\end{equation} 
where $c_{B,T}(\alpha) \in \zed[\alpha^{-1},\alpha]$ and $c_{B,T}(\alpha)=0$ if $T\ll -1$. A byproduct of our work is that, for $T\gg1$, $c_{B,T}(\alpha)$ is a polynomial of $\alpha$ and $T$.

\begin{proposition}\label{prop-hidden-polynomial}
Define $Q_B(\alpha,T)\in\Q[\alpha,\alpha^{-1},T]$ by $Q_B(\alpha,T):= \sum_{(i,j)\in \zed\times\zed} (-1)^i\alpha^j P_{B,i,j}(T)$. Then
\begin{enumerate}
	\item For $T\gg1$, $Q_B(\alpha,T)=c_{B,T}(\alpha)$.
	\item $Q_B(\alpha,T)$ is invariant under transverse Markov moves.
	\item $Q_B(\alpha,T)$ satisfies the skein relation: 
	\[
	\begin{cases}
	\alpha^{-1} Q_{B_+}(\alpha, T+1) - \alpha Q_{B_-}(\alpha, T) = Q_{B_0}(\alpha, T+1)- Q_{B_0}(\alpha, T),\\
	Q_{B'}(\alpha, T) = -\alpha^{-1} Q_{B}(\alpha, T+1), \\
	Q_{U}(\alpha, T) = \alpha^{-1}, \\
	\end{cases}
	\]
	where 
\begin{itemize}
	\item $B_+$, $B_-$ and $B_0$ are closed braids identical outside the part shown in Figure \ref{fig-skein},
	\item $B'$ is obtained from the closed braid $B$ by a negative stabilization,
	\item $U$ is the unknot with no crossings.
\end{itemize}
	\item Using (2) and (3), $Q_B(\alpha,T)$ can be computed by any transverse computation tree\footnote{See Definition \ref{def-computation-tree} and Theorem \ref{thm-computation-tree-exists} below.} for $B$.
\end{enumerate}
\end{proposition}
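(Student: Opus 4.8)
The whole proposition flows from one device: substitute $\alpha \mapsto \alpha\xi$ into the defining properties \eqref{eq-HOMFLYPT-normalization} of the HOMFLYPT polynomial, read off the coefficient of each power of $\xi$ to obtain relations among the coefficients $c_{B,T}(\alpha)$ of \eqref{eq-c-T}, and then transfer these to the polynomials $Q_B(\alpha,T)$ using the elementary principle that a polynomial identity in $T$ valid for all large $T$ holds identically. Part (1) is the base computation: the substitution sends the monomial $\alpha^j\xi^k$ in \eqref{eq-def-HOMFLYPT} to $\alpha^j\xi^{j+k}$, so the exponent of $\xi$ becomes exactly the polynomial grading $j+k$. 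Writing $j+k = 2T$ gives $c_{B,T}(\alpha) = \sum_{(i,j)}(-1)^i\alpha^j\dim_\Q\fH_0^{i,j,2T-j}(B)$, and for $T \gg 1$ Lemma \ref{lemma-def-Hilbert-H-0} replaces each dimension by $P_{B,i,j}(T)$, so that $c_{B,T}(\alpha) = Q_B(\alpha,T)$.

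For part (2), observe that $F_B(\alpha,\xi)$, and therefore $F_B(\alpha\xi,\xi)$, is invariant under transverse Markov moves, so each coefficient $c_{B,T}(\alpha)$ is as well. If $B$ and $\tilde B$ are transversely Markov equivalent, then $Q_B(\alpha,T) = c_{B,T}(\alpha) = c_{\tilde B,T}(\alpha) = Q_{\tilde B}(\alpha,T)$ for all large $T$. Since, for each power of $\alpha$, both sides are polynomials in $T$ agreeing at infinitely many integers, they coincide identically; hence $Q_B = Q_{\tilde B}$.

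For part (3), I would substitute $\alpha \mapsto \alpha\xi$ into each relation of \eqref{eq-HOMFLYPT-normalization}, expand using \eqref{eq-c-T}, and match coefficients. The skein relation becomes $\alpha^{-1}\xi^{-1}F_{B_+}(\alpha\xi,\xi) - \alpha\xi F_{B_-}(\alpha\xi,\xi) = (\xi^{-1}-\xi)F_{B_0}(\alpha\xi,\xi)$, and equating coefficients of $\xi^{2T-1}$ gives $\alpha^{-1}c_{B_+,T}(\alpha) - \alpha c_{B_-,T-1}(\alpha) = c_{B_0,T}(\alpha) - c_{B_0,T-1}(\alpha)$. The negative-stabilization relation yields $c_{B',T}(\alpha) = -\alpha^{-1}c_{B,T+1}(\alpha)$, and expanding $F_U(\alpha\xi,\xi) = \alpha^{-1}/(1-\xi^2) = \alpha^{-1}\sum_{T\ge0}\xi^{2T}$ gives $c_{U,T}(\alpha) = \alpha^{-1}$ for $T\ge 0$. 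These are exact identities in every $T$; restricting to large $T$, replacing each $c$ by the corresponding $Q$, and applying the polynomial-agreement principle of part (2) promotes them to the stated identities (after reindexing the first by $T \mapsto T+1$). The one thing demanding care is the bookkeeping of the half-integer $\xi$-shifts, which is exactly what produces the $T+1$ versus $T$ arguments in the final relations.

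For part (4), the relations of (2) and (3) are, after the substitution, formally identical in structure to the invariance and recursion that characterize the HOMFLYPT polynomial, so the Franks--Williams computation-tree argument applies without change. By Theorem \ref{thm-computation-tree-exists} a transverse computation tree reduces $B$ through skein moves and negative stabilizations to disjoint unknots; repeated use of the skein and stabilization relations of (3), with base value $Q_U(\alpha,T) = \alpha^{-1}$, then evaluates $Q_B(\alpha,T)$, while invariance (2) makes the output independent of the chosen tree. I expect the only genuine labor to lie in the coefficient bookkeeping of part (3) and in verifying the polynomial-promotion step coefficient-by-coefficient in $\alpha$; part (4) is an appeal to the existing computation-tree framework rather than a new argument.
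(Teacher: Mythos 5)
Your parts (1)--(3) are correct and are essentially the paper's own proof: the substitution $\alpha\mapsto\alpha\xi$ identifies the $\xi$-exponent with the (even) polynomial grading, Lemma \ref{lemma-def-Hilbert-H-0} replaces dimensions by Hilbert polynomials for $T\gg1$, and identities between polynomials in $T$ that hold for infinitely many $T$ hold identically. Your coefficient bookkeeping in (3), which the paper compresses to ``one can check,'' is accurate.

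Part (4), however, has a genuine gap. A transverse computation tree (Definition \ref{def-computation-tree}) uses only transverse Markov moves and Conway splittings --- negative stabilization is not among its moves --- and its terminal nodes are crossingless closed braids $U^{\sqcup l}$ with arbitrarily many components, not single unknots. Your computation therefore bottoms out at multi-component unlinks, for which the base value $Q_U(\alpha,T)=\alpha^{-1}$ is not enough, and the Franks--Williams argument does not supply the missing values ``without change.'' In the classical setting the unlink values follow from the unknot value because the skein relation can be solved for the $B_0$-term by dividing by the invertible factor $\xi^{-1}-\xi$; after your substitution, the $B_0$-term enters only through the finite difference $Q_{B_0}(\alpha,T+1)-Q_{B_0}(\alpha,T)$, whose kernel is all of $\Q[\alpha,\alpha^{-1}]$. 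Concretely, performing a Conway splitting on the closed $2$-braid $\widehat{\sigma_1}$ and using (2) and (3) yields only the difference equation $Q_{U^{\sqcup 2}}(\alpha,T+1)-Q_{U^{\sqcup 2}}(\alpha,T)=\alpha^{-1}+\alpha^{-2}$, which determines $Q_{U^{\sqcup 2}}$ only up to an additive element of $\Q[\alpha,\alpha^{-1}]$. The paper closes exactly this hole by computing the terminal values directly from the matrix factorization definition (Lemma \ref{lemma-unlink}), and then organizes the descent through the tree via the operators $S_\alpha$ and $\Delta_\alpha$ into the explicit formula \eqref{eq-tree-computation-Q}, in which the tree is only ever used in the parent-from-children direction, so no inversion of the difference operator is needed. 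That operator formula is not optional detail either: it is what drives the lower-bound argument of Lemma \ref{lemma-degree-Hilbert-lower-bound}, so the bookkeeping you deferred to ``the existing computation-tree framework'' is precisely the substantive content of this part of the proposition.
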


From the proof of Theorem \ref{thm-Hilbert-degree}, we also have the following simple corollary.

\begin{corollary}\label{cor-degree-hidden-polynomial}
Let $B$ be a closed braid with $l$ components. Then $\deg_T Q_B(\alpha,T) = l-1$.
\end{corollary}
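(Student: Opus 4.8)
The plan is to bound $\deg_T Q_B(\alpha,T)$ from above and below separately, the upper bound being a formal consequence of the work behind Theorem~\ref{thm-Hilbert-degree} and the lower bound being the real content. For the upper bound, note first that each $P_{B,i,j}(T)$ is the Hilbert polynomial of a genuine $\zed$-graded $\Q$-vector space, so its leading coefficient is nonnegative. Hence there is no cancellation in $P_B=\sum_{i,j}P_{B,i,j}$, and $\deg_T P_{B,i,j}\le\deg_T P_B=l-1$ for every $(i,j)$. Since $Q_B=\sum_{i,j}(-1)^i\alpha^j P_{B,i,j}$ is a finite $\Q[\alpha,\alpha^{-1}]$-linear combination of the $P_{B,i,j}$, this gives $\deg_T Q_B\le l-1$ at once.

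For the lower bound I would show that the coefficient $\Lambda(B)\in\Q[\alpha,\alpha^{-1}]$ of $T^{l-1}$ in $Q_B$ is nonzero, arguing by induction along a transverse computation tree as permitted by Proposition~\ref{prop-hidden-polynomial}(4). Extracting the top-order term in $T$ from the skein relation of Proposition~\ref{prop-hidden-polynomial}(3) produces explicit recursions for $\Lambda$. When the resolved crossing joins two arcs of one component, $B_0$ has $l+1$ components, the right-hand finite difference contributes at order $T^{l-1}$, and one finds $\alpha^{-1}\Lambda(B_+)-\alpha\Lambda(B_-)=l\,\Lambda(B_0)$; when the two arcs lie on different components, $B_0$ has $l-1$ components, the difference falls two orders in $T$, and one simply gets $\Lambda(B_+)=\alpha^2\Lambda(B_-)$. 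Negative stabilization multiplies $\Lambda$ by $-\alpha^{-1}$, while positive stabilization, being a transverse Markov move, leaves $Q_B$ and hence $\Lambda$ unchanged. The leaves of the tree are unlinks, and a direct computation (e.g.\ from $F_{U_l}(\alpha\xi,\xi)=\alpha^{-1}(1+\alpha^{-1})^{l-1}(1-\xi^2)^{-l}$) gives the nonzero base value $\Lambda(U_l)=\alpha^{-1}(1+\alpha^{-1})^{l-1}/(l-1)!$.

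The hard part is ruling out cancellation. Because of the signs $(-1)^i$ and the weights $\alpha^j$ in the definition of $Q_B$, the $T^{l-1}$ coefficient could in principle vanish even though $P_B$ has degree exactly $l-1$; in particular Theorem~\ref{thm-Hilbert-degree} does not by itself give the corollary. To control this I would track $\Lambda(B)$ through the tree via its extremal $a$-degree ($=\alpha$-degree) part, exactly as Franks and Williams do for the HOMFLYPT polynomial: the factors $\alpha^{\pm1}$ in the skein and stabilization rules shift $a$-degrees in a controlled way, and their analysis isolates a distinguished extremal contribution that is never cancelled along the tree. This contribution survives at order $T^{l-1}$ with nonzero coefficient, so $\Lambda(B)\neq0$. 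This is the same computation-tree input that furnishes the lower bound in the proof of Theorem~\ref{thm-Hilbert-degree} (a nonzero signed count at degree $l-1$ forces the unsigned $P_B$ to have degree $l-1$ as well), which is why the corollary drops out of that proof. Combining the two bounds yields $\deg_T Q_B(\alpha,T)=l-1$.
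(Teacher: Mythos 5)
Your upper bound is exactly the paper's: nonnegativity of the leading coefficients of the $P_{B,i,j}$ forces $\deg_T P_{B,i,j}\le\deg_T P_B=l-1$, hence $\deg_T Q_B\le l-1$. Your recursions for the $T^{l-1}$-coefficient $\Lambda(B)$ are also correctly derived from Proposition \ref{prop-hidden-polynomial}(3), and the base value at unlinks is right. The gap is precisely the step you yourself call ``the hard part'': you assert that an extremal-$\alpha$-degree analysis ``exactly as Franks and Williams do'' isolates a contribution that is never cancelled, but you neither carry this out nor is it clear it can be carried out in the node-by-node inductive form you describe. Concretely, a computation tree forces you to traverse Conway splittings in whichever direction the tree dictates, and at a self-crossing the recursion solved for $B_-$ reads $\Lambda(B_-)=\alpha^{-2}\Lambda(B_+)-l\,\alpha^{-1}\Lambda(B_0)$; without quantitative control on the relative extremal $\alpha$-degrees \emph{and} signs of $\Lambda(B_+)$ and $\Lambda(B_0)$, the extremal terms of the two summands can sit in the same $\alpha$-degree with opposite signs, so no invariant of the form ``the extremal coefficient is nonzero (or positive)'' is preserved. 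Extremal-degree tracking in Franks--Williams is the mechanism behind their braid-index \emph{inequality} -- an inequality exactly because extremal terms can cancel -- and it is not what powers the argument the paper adapts.

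What the paper does instead (Lemma \ref{lemma-degree-Hilbert-lower-bound}) is global and arithmetic rather than inductive, and it begins with a simplification you skip: specialize to $\alpha=1$, which suffices because setting $\alpha=1$ can only decrease $\deg_T$, so $\deg_T Q_B(\alpha,T)\ge\deg_T Q_B(1,T)$. Then $Q_B(1,T)$ is written via \eqref{eq-tree-computation-Q-spec} as a sum over \emph{all} terminal nodes $N$ of terms $\pm S^{p_N}\Delta^{q_N}\bigl(2^{l_N-1}\bn{T}{l_N-1}\bigr)$, and cancellation is ruled out $2$-adically: the unique terminal node $N_0$ with $q_{N_0}=0$ (always switch the crossing) appears with a plus sign and contributes $\frac{2^{l-1}}{(l-1)!}$ to the coefficient of $T^{l-1}$, while any other node contributing at that order must satisfy $l_N\ge l+1$ and contributes $\pm\frac{2^{l_N-1}}{(l-1)!}$, a multiple of $\frac{2^{l}}{(l-1)!}$. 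Hence the total coefficient is $\frac{k}{(l-1)!}$ with $k\equiv 2^{l-1}\pmod{2^l}$, which is nonzero \emph{no matter what the signs are}. This $\bmod\ 2^l$ count is the missing ingredient in your proposal; if you replace your inductive extremal-degree argument with it, your proof closes and coincides with the paper's.
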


\begin{example}\label{example-2-braids}
For $k\geq 0$, define $B_k$ to be the closed braid of two strands with $k$ positive crossings. For $k<0$, define $B_k$ to be the closed braid of two strands with $-k$ negative crossings. Then, for all $k \in \zed$,
\begin{eqnarray}
\label{eq-2-braids-even} Q_{B_{2k}}(a,T) & = & \alpha^{2k-1}(1+\alpha^{-1})(T-k), \\
\label{eq-2-braids-odd} Q_{B_{2k+1}}(a,T) & = & k\alpha^{2k} +(k+1)\alpha^{2k-1}.
\end{eqnarray}
Note that $\deg_T Q_{B_{2k}}(a,T) =1$ and $\deg_T Q_{B_{2k-1}}(a,T) =0$, which are what we expect from Corollary \ref{cor-degree-hidden-polynomial}.
\end{example}

\begin{question}
For a given closed braid $B$, what is the smallest $T_0 \in \zed$ such that $Q_B(\alpha,T)=c_{B,T}(\alpha)$ for all $T \geq T_0$?
\end{question}

The rest of this paper is organized as following:
\begin{itemize}
	\item In Section \ref{sec-Hilbert}, we recall the definition of the Hilbert polynomial.
	\item In Section \ref{sec-H-0}, we review the $\fH_0$ normalization of the HOMFLYPT homology and use Rasmussen's argument to show that the degree of the Hilbert polynomial of $\fH_0(B)$ is at most $l-1$.
	\item In Section \ref{sec-computation-tree}, we use Franks and Williams' computation tree argument to show that the degree of the Hilbert polynomial of $\fH_0(B)$ is at least $l-1$. Theorem \ref{thm-Hilbert-degree} through Example \ref{example-2-braids} above are all proved in this section.
\end{itemize}

\begin{acknowledgments}
I would like to thank the referee for carefully reading a previous draft of this manuscript, providing many valuable comments and finding quite a few mistakes and typos. 
\end{acknowledgments}

\section{The Hilbert polynomial}\label{sec-Hilbert}

In this section, we recall the definition of the Hilbert polynomial. For the convenience of use later on, we adopt the slightly unusual grading convention introduced by Khovanov and Rozansky in our formulation.

\begin{definition}\label{def-graded-module}
Let $R$ be the polynomial ring $R=\Q[x_1,\dots,x_l]$ graded so that each $x_i$ is homogeneous of degree $2$. 
\begin{itemize}
	\item A graded module $M$ over $R$ is an $R$-module $M$ with a grading $M=\bigoplus_{n\in \zed} M_n$ such that $x_i\cdot M_n \subset M_{n+2}$. 
	\item We say the grading of $M$ is even if $M_n=0$ whenever $n$ is odd. 
	\item For an integer $k$, $M\{k\}$ is $M$ with grading shifted by $k$. That is, $M_n = M\{k\}_{n+k}$, where $M\{k\}_{n+k}$ is the homogeneous component of $M\{k\}$ of degree $n+k$.
	\item A finitely generated free graded module over $R$ is a finitely generated graded module over $R$ that is free and admits a homogeneous $R$-basis, in other words, a graded module over $R$ that is isomorphic to a direct sum $\bigoplus_{j=1}^m R\{k_j\}$, where $k_j\in \zed$.
\end{itemize} 
\end{definition}

Hilbert's Syzygy Theorem is key to the existence of the Hilbert polynomial.

\begin{theorem}[Hilbert's Syzygy Theorem]\label{thm-syzygy}
Let $R$ be the polynomial ring $R=\Q[x_1,\dots,x_l]$ graded as in Definition \ref{def-graded-module}. Assume that $M=\oplus_{n\in \zed} M_{2n}$ is a finitely generated graded $R$-module whose grading is even. Then there is an exact sequence of graded $R$ modules $0 \rightarrow F_l \rightarrow F_{l-1} \rightarrow \cdots \rightarrow F_1 \rightarrow F_0 \rightarrow M \rightarrow 0$, in which,
\begin{itemize}
	\item each $F_j$ is a finitely generated free graded module over $R$ whose grading is even,
	\item each arrow is a homogeneous map of graded $R$-modules preserving the grading.
\end{itemize}

As a standard consequence, there is a polynomial $P(T)\in \Q[T]$ of degree at most $l-1$ such that $\dim_\Q M_{2T} = P(T)$ for $T\gg1$. This $P(T)$ is called the Hilbert polynomial of $M$.
\end{theorem}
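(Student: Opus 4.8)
The plan is to prove the theorem in two stages: first produce the finite free resolution, then extract the Hilbert polynomial from it. For the resolution I would proceed by iterated syzygies. Choosing a finite set of homogeneous generators of $M$ (possible since $M$ is finitely generated and graded) gives a grading-preserving surjection $F_0 \rightarrow M \rightarrow 0$ from a finitely generated free graded module whose grading is even. Let $K_0 = \ker(F_0 \rightarrow M)$; because the map preserves grading and both modules sit in even degrees, $K_0$ is again a graded submodule with even grading, and it is finitely generated since $R$ is Noetherian (Hilbert basis theorem). Applying the same construction to $K_0$ produces $F_1 \rightarrow F_0$, and iterating yields a (a priori infinite) free resolution with grading-preserving homogeneous maps and even gradings throughout.

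The substance of the theorem is that this resolution terminates with $F_l$, i.e. that the $l$-th syzygy module is free. I would deduce this from the vanishing $\mathrm{Tor}_i^R(M, \Q) = 0$ for $i > l$, where $\Q = R/(x_1,\dots,x_l)$ is the residue field. That vanishing follows by computing $\mathrm{Tor}$ from the Koszul complex on $x_1,\dots,x_l$, which is a free resolution of $\Q$ of length exactly $l$. Passing to a \emph{minimal} resolution (using the graded Nakayama lemma to discard redundant generators at each step, so that the differentials vanish after tensoring with $\Q$), the rank of $F_i$ equals $\dim_\Q \mathrm{Tor}_i^R(M,\Q)$; hence $F_i = 0$ for $i > l$, and we obtain the asserted exact sequence $0 \rightarrow F_l \rightarrow \cdots \rightarrow F_0 \rightarrow M \rightarrow 0$. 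I expect this termination step to be the main obstacle: the iterated-syzygy construction is automatic, and all the genuine content lies in the global-dimension bound $\mathrm{gldim}\, R = l$.

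For the Hilbert polynomial I would compute graded dimensions of the free modules directly. Monomials $x_1^{a_1}\cdots x_l^{a_l}$ of polynomial degree $2m$ correspond to tuples with $\sum a_i = m$, so $\dim_\Q R_{2m} = \binom{m+l-1}{l-1}$, and hence for each even shift $k$ one has $\dim_\Q R\{k\}_{2T} = \binom{T - k/2 + l - 1}{l-1}$, which agrees with a polynomial in $T$ of degree $l-1$ once $T$ is large. Writing each $F_j$ as a finite direct sum of such $R\{k\}$ and using additivity of $\dim_\Q$ along the exact sequence, I get $\dim_\Q M_{2T} = \sum_{j=0}^{l}(-1)^j \dim_\Q (F_j)_{2T}$ for every $T$. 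For $T \gg 1$ the right-hand side is a finite alternating sum of shifted binomial coefficients, each of degree $l-1$ in $T$, so it equals a polynomial $P(T) \in \Q[T]$ of degree at most $l-1$; the leading terms can cancel, which is precisely why the bound is an inequality rather than an equality. This $P(T)$ is the desired Hilbert polynomial.
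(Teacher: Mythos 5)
Your proof is correct. Note, however, that the paper does not actually prove this theorem: it defers both halves to the literature, citing \cite[Theorem 4.3]{Arrondo-notes} for the finite free resolution and \cite[Corollary 4.4]{Arrondo-notes} for the passage to the Hilbert polynomial, so there is no in-paper argument to compare against. Your route is the standard homological one: iterated syzygies plus Noetherianness of $R$ give an a priori infinite graded resolution; passing to a minimal resolution via graded Nakayama identifies $\mathrm{rank}\,F_i$ with $\dim_\Q \mathrm{Tor}_i^R(M,\Q)$; and computing $\mathrm{Tor}$ from the Koszul resolution of $\Q=R/(x_1,\dots,x_l)$, which has length exactly $l$, forces $\mathrm{Tor}_i^R(M,\Q)=0$ for $i>l$, terminating the resolution. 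This is airtight (the two points you leave implicit are both fine: graded Nakayama applies because a finitely generated module over a non-negatively graded ring is bounded below, and the identity $\dim_\Q R\{k\}_{2T}=\binom{T-k/2+l-1}{l-1}$ holds once $T\geq k/2$, which is why the conclusion is only for $T\gg 1$). By contrast, the proof in the cited notes is elementary, avoiding $\mathrm{Tor}$ and minimal resolutions. What your approach buys is brevity given the homological machinery, plus strictly more information: the ranks of the $F_i$ in the minimal resolution are the graded Betti numbers, and the argument shows every finitely generated graded $R$-module has projective dimension at most $l$, not merely the existence of some length-$l$ resolution. What the elementary route buys is self-containedness, which is presumably why the paper points the reader there. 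Your final step, writing $\dim_\Q M_{2T}$ as a finite alternating sum of shifted binomial coefficients, each a degree-$(l-1)$ polynomial in $T$ for large $T$, and noting that leading terms may cancel, is exactly the standard deduction of the Hilbert polynomial and correctly explains why the degree bound is an inequality rather than an equality.
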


The existence of the free resolution in Theorem \ref{thm-syzygy} is the traditional content of Hilbert's Syzygy Theorem, a detailed elementary proof of which can be found in for example \cite[Theorem 4.3]{Arrondo-notes}. The existence of the Hilbert polynomial in Theorem \ref{thm-syzygy} is a standard consequence of Hilbert's Syzygy Theorem. For a simple proof of this, see for example \cite[Corollary 4.4]{Arrondo-notes}.

\section{The $\fH_0$ normalization of the HOMFLYPT homology}\label{sec-H-0}

Next, we review the HOMFLYPT homology with the $\fH_0$ normalization. Roughly speaking, to each MOY graph, we associate a $\zed_2\times\zed^{2}$-graded matrix factorization. Each closed braid diagram $B$ is resolved into a collection of MOY graphs. Using the crossing information of $B$, we assemble the matrix factorizations of the MOY resolutions of $B$ into a chain complex $\fC_0(B)$ of matrix factorizations. The HOMFLYPT homology $\fH_0(B)$ is then defined from $\fC_0(B)$.

\subsection{$\zed_2\times\zed^{2}$-graded matrix factorizations}

Let $R$ be the polynomial ring $R=\Q[a,x_1,\dots,x_n]$. We define a $\zed^{2}$-grading on $R$ so that $a,x_1,\dots,x_n$ are homogeneous with $\deg a = (2,0)$ and $\deg x_i =(0,2)$ for $i=1,\dots, n$. A $\zed^{2}$-graded $R$-module $M$ is a $R$-module $M$ equipped with a $\zed^{2}$-grading such that, for any homogeneous element $m$ of $M$, $\deg (am) = \deg m + (2,0)$ and $\deg (x_i m) = \deg m +(0,2)$ for $i=1,\dots, k$. We call the first component of this $\zed^{2}$-grading of $M$ the $a$-grading and denote its degree function by $\deg_a$. We call the second component of this $\zed^{2}$-grading of $M$ the $x$-grading and denote its degree function by $\deg_x$. 

For a $\zed^{2}$-graded $R$-module $M$, we denote by $M\{j,k\}$ the $\zed^{2}$-graded $R$-module obtained by shifting the $\zed^{2}$-grading of $M$ by $(j,k)$. That is, for any homogeneous element $m$ of $M$, $\deg_{M\{j,k\}} m = \deg_M m + (j,k)$.

\begin{definition}\label{def-mf}
Let $w$ be a homogeneous element of $R=\Q[a,x_1,\dots,x_n]$ with degree $(2,2)$. A $\zed_2\times\zed^{2}$-graded matrix factorization $M$ of $w$ over $R$ is a collection of two $\zed^{2}$-graded free $R$-modules $M_0$, $M_1$ and two homogeneous $R$-module maps $d_0:M_0\rightarrow M_1$, $d_1:M_1\rightarrow M_0$ of degree $(1,1)$, called differential maps, such that 
\[
d_1 \circ d_0=w\cdot\id_{M_0}, \hspace{1cm}  d_0 \circ d_1=w\cdot\id_{M_1}.
\]
We usually write $M$ as $M_0 \xrightarrow{d_0} M_1 \xrightarrow{d_1} M_0$.

The $\zed_2$-grading of $M$ takes value $\ve$ on $M_\ve$. The $a$- and $x$-gradings of $M$ are the $a$- and $x$-gradings of the underlying $\zed^{2}$-graded $R$-module $M_0 \oplus M_1$.

Following \cite{KR1}, we denote by $M\left\langle 1\right\rangle$ the matrix factorization $M_1 \xrightarrow{d_1} M_0 \xrightarrow{d_0} M_1$
\end{definition}

Note that:
\begin{itemize}
	\item For any $\zed_2\times\zed^{2}$-graded matrix factorization $M$ of $w$ over $R$ and $j,k \in \zed$, $M\{j,k\}$ is naturally a $\zed_2\times\zed^{2}$-graded matrix factorization of $w$ over $R$.
	\item For any two $\zed_2\times\zed^{2}$-graded matrix factorizations $M$ and $M'$ of $w$ over $R$, $M\oplus M'$ is naturally a $\zed_2\times\zed^{2}$-graded matrix factorization of $w$ over $R$.
	\item Let $w$ and $w'$ be two homogeneous elements of $R$ with degree $(2,2)$. For $\zed_2\times\zed^{2}$-graded matrix factorizations $M$ of $w$ and $M'$ of $w'$ over $R$, the tensor product $M\otimes_R M'$ is the $\zed_2\times\zed^{2}$-graded matrix factorization of $w+w'$ over $R$ such that:
\begin{itemize}
	\item $(M\otimes M')_0 = (M_0\otimes M'_0)\oplus (M_1\otimes M'_1)$, $(M\otimes M')_1 = (M_1\otimes M'_0)\oplus (M_1\otimes M'_0)$,
	\item The differential is given by the signed Leibniz rule. That is, for $m\in M_\ve$ and $m'\in M'$, $d(m\otimes m')=(d_M m)\otimes m' + (-1)^\ve m \otimes (d_{M'} m')$.
\end{itemize}
\end{itemize}

\begin{definition}\label{def-morph-mf}
Let $w$ be a homogeneous element of $R$ with degree $(2,2)$, and $M$, $M'$ any two $\zed_2\times\zed^{2}$-graded matrix factorizations of $w$ over $R$. A homogeneous morphism from $M$ to $M'$ of degree $(j,k)$ is a homogeneous homomorphism of $\zed^2$-graded $R$-modules $f:M\rightarrow M'$ of degree $(j,k)$ preserving the $\zed_2$-grading and satisfying $d_{M'}f=fd_{M}$. 

Two homogeneous morphisms $f,g:M\rightarrow M'$ of degree $(j,k)$ of $\zed_2\oplus\zed^{2}$-graded matrix factorizations are called homotopic if there is an $R$-module homomorphism $h:M\rightarrow M'\left\langle 1\right\rangle$ of degree $(j-1,k-1)$ preserving the $\zed_2$-grading such that $f-g = d_{M'}h+hd_M$.
\end{definition}

\begin{definition}\label{def-koszul-mf}
If $a_0,a_1\in R$ are homogeneous elements with $\deg a_0 +\deg a_1=(2,2)$, then denote by $(a_0,a_1)_R$ the $\zed_2\times\zed^{2}$-graded matrix factorization $R \xrightarrow{a_0} R\{1-\deg_a a_0,~1-\deg_x{a_0}\} \xrightarrow{a_1} R$ of $a_0a_1$ over $R$. More generally, if $a_{1,0},a_{1,1},\dots,a_{l,0},a_{l,1}\in R$ are homogeneous with $\deg a_{j,0} +\deg a_{j,1}=(2,2)$, then denote by 
\[
\left(%
\begin{array}{cc}
  a_{1,0}, & a_{1,1} \\
  a_{2,0}, & a_{2,1} \\
  \dots & \dots \\
  a_{l,0}, & a_{l,1}
\end{array}%
\right)_R
\]
the tensor product $(a_{1,0},a_{1,1})_R \otimes_R (a_{2,0},a_{2,1})_R \otimes_R \cdots \otimes_R (a_{l,0},a_{l,1})_R$. This is a $\zed_2\times\zed^{2}$-graded matrix factorization of $\sum_{j=1}^l a_{j,0} a_{j,1}$ over $R$, and is call the Koszul matrix factorization associated to the above matrix.
\end{definition}

Note that Koszul matrix factorizations are finitely generated over $R$. The following are several lemmas about Koszul matrix factorizations that will be useful later on.

\begin{lemma}\cite[Proposition 2]{KR1}\label{entries-null-homotopic}
Let $a_{1,0},a_{1,1},\dots,a_{l,0},a_{l,1}$ be as in Definition \ref{def-koszul-mf} and
\[
M = \left(%
\begin{array}{ll}
  a_{1,0}, & a_{1,1} \\
  a_{2,0}, & a_{2,1} \\
  \dots & \dots \\
  a_{l,0}, & a_{l,1}
\end{array}%
\right)_R.
\]
If $r$ is an element of the ideal $(a_{1,0}, a_{1,1}, \dots, a_{l,0}, a_{l,1})$ of $R$, then the multiplication by $r$, as an endomorphism of $M$, is homotopic to $0$.
\end{lemma}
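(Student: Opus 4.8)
The plan is to reduce to the case of a single Koszul factor and then propagate the homotopy through the tensor product by the Leibniz rule, finishing with an easy linearity argument for a general element of the ideal.

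First I would treat a single factor $(a_0,a_1)_R$, which by Definition \ref{def-koszul-mf} is $R \xrightarrow{a_0} R\{1-\deg_a a_0,~1-\deg_x a_0\} \xrightarrow{a_1} R$. Writing a homotopy as a pair of maps $h_0\colon M_0 \to M_1$ and $h_1\colon M_1 \to M_0$ (equivalently, a $\zed_2$-grading-preserving map $M\to M\left\langle 1\right\rangle$ in the sense of Definition \ref{def-morph-mf}), the relevant relations are $d_1 h_0 + h_1 d_0$ on $M_0$ and $d_0 h_1 + h_0 d_1$ on $M_1$. A direct check shows that $h_0 = \id$, $h_1 = 0$ yields $a_1\cdot\id$ on both summands, so multiplication by $a_1$ is null-homotopic; symmetrically, $h_0 = 0$, $h_1 = \id$ exhibits multiplication by $a_0$ as null-homotopic. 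I would record that the grading shift built into $M_1$ is exactly what makes these homotopies homogeneous of the correct degree as required by Definition \ref{def-morph-mf}.

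Next I would pass to the full Koszul factorization $M = (a_{1,0},a_{1,1})_R \otimes_R \cdots \otimes_R (a_{l,0},a_{l,1})_R$. Since each $a_{i,\ve}$ is central in $R$, multiplication by $a_{i,\ve}$ on $M$ coincides with the endomorphism that acts as multiplication by $a_{i,\ve}$ on the $i$-th tensor factor and as $\id$ on all the others. Letting $H_i$ be the single-factor homotopy from the previous step, I would set $\tilde H = \id \otimes \cdots \otimes H_i \otimes \cdots \otimes \id$ and verify, using the signed Leibniz rule recorded after Definition \ref{def-mf}, that $d_M \tilde H + \tilde H d_M$ equals multiplication by $a_{i,\ve}$ on $M$. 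The only real content here is the sign bookkeeping coming from the tensor factors to the left of the $i$-th, which I expect to be the main (though routine) obstacle; the signs introduced by the Leibniz rule when $\tilde H$ passes the differentials of those factors must cancel, and this is where care is needed. The upshot is that multiplication by each generator $a_{i,\ve}$ is null-homotopic on $M$.

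Finally, for a general $r = \sum_{i,\ve} b_{i,\ve}\, a_{i,\ve}$ in the ideal $(a_{1,0},\dots,a_{l,1})$ with $b_{i,\ve}\in R$, I would use that multiplication by $b_{i,\ve}$ commutes with $d_M$: if multiplication by $a_{i,\ve}$ equals $d_M H + H d_M$, then multiplication by $b_{i,\ve} a_{i,\ve}$ equals $d_M (b_{i,\ve} H) + (b_{i,\ve} H) d_M$, hence is again null-homotopic. Summing over $i$ and $\ve$, and noting that a sum of null-homotopic endomorphisms is null-homotopic, shows that multiplication by $r$ is homotopic to $0$, completing the proof.
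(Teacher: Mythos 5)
Your proposal is correct, and it is essentially the proof of the result the paper is quoting: the paper itself gives no argument for this lemma but cites \cite[Proposition 2]{KR1}, and the proof there is exactly your three steps (explicit homotopies $(h_0,h_1)=(\id,0)$ and $(0,\id)$ on a rank-one factor, transport through the tensor product, then $R$-linearity to handle a general element of the ideal). One precision is worth making explicit: for the cross terms to cancel, $\tilde H$ must be the \emph{graded} tensor product of morphisms, i.e.
\[
\tilde H(m_1\otimes\cdots\otimes m_l)=(-1)^{\ve_1+\cdots+\ve_{i-1}}\, m_1\otimes\cdots\otimes H_i(m_i)\otimes\cdots\otimes m_l, \qquad m_j\in M^{(j)}_{\ve_j},
\]
where $M^{(j)}$ is the $j$-th Koszul factor; with the literal unsigned formula $\id\otimes\cdots\otimes H_i\otimes\cdots\otimes\id$, the terms of $d_M\tilde H+\tilde H d_M$ coming from the differentials of the factors to the left of the $i$-th one add instead of cancelling. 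Since you flagged precisely this sign issue as the point needing care, the verification you outline goes through as stated.
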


Lemma \ref{lemma-contraction-weak} below is essentially \cite[Proposition 9]{KR1}. For a proof of this lemma, see for example \cite[Proposition 2.17]{Wu-triple-trans}.

\begin{lemma}\cite[Proposition 9]{KR1}\label{lemma-contraction-weak}
Let $I$ be an ideal of $R$ generated by homogeneous elements. Assume $w$, $a_0$ and $a_1$ are homogeneous elements of $R$ such that $\deg w=\deg a_0 +\deg a_1 = (2,2)$ and $w+a_0a_1 \in I$. Then $w \in I+(a_0)$. 

Let $M$ be a $\zed_2\oplus\zed^{2}$-graded matrix factorization of $w$ over $R$, and $\widetilde{M}=M \otimes_R (a_0,a_1)_R$. Then ${\widetilde{M}/I\widetilde{M}}$ and ${M/(I+(a_0))M}$are both $\zed_2 \oplus\zed^{2}$-graded chain complexes of $R$-modules.

If $a_0$ is not a zero-divisor in $R/I$, then there is an $R$-linear quasi-isomorphism 
\[
f:{\widetilde{M}/I\widetilde{M}} \rightarrow {(M/(I+(a_0))M)\left\langle 1\right\rangle \{1-\deg_a a_0, 1-\deg_x a_0 \}}
\] 
that preserves the $\zed_2\oplus\zed^{2}$-grading.
\end{lemma}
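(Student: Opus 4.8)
The plan is to exhibit an explicit $R$-linear, grading-preserving chain map $f$ and prove it is a quasi-isomorphism by showing its kernel is a contractible complex; the non-zero-divisor hypothesis on $a_0$ is exactly what makes the kernel collapse.

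First the routine preliminaries. From $w+a_0a_1\in I$ we get $w=(w+a_0a_1)-a_0a_1\in I+(a_0)$, so modulo $I+(a_0)$ the potential of $M$ vanishes and $M/(I+(a_0))M$ is a genuine $\zed_2\oplus\zed^{2}$-graded complex; likewise $\widetilde{M}$ is a matrix factorization of $w+a_0a_1\in I$, so $\widetilde{M}/I\widetilde{M}$ is a complex. Writing $\overline{R}=R/I$ and $\overline{M}=M/IM$, the freeness of $M$ over $R$ makes $\overline{M}$ free over $\overline{R}$; since $a_0$ is a non-zero-divisor in $\overline{R}$, multiplication by $a_0$ is injective on $\overline{M}$ and on every $\overline{R}$-module built from it. I will use this injectivity as a partial inverse, writing ``division by $a_0$'' for it. Abbreviating the Koszul factor $K=(a_0,a_1)_R$ with $K_0=R$ and $K_1=R\{1-\deg_a a_0,\,1-\deg_x a_0\}$, we have $\widetilde{M}/I\widetilde{M}=(\overline{M}\otimes_{\overline{R}}K_0)\oplus(\overline{M}\otimes_{\overline{R}}K_1)$, where the signed Leibniz differential $D$ carries, off the diagonal, multiplication by $a_0$ from the $K_0$-summand to the $K_1$-summand and by $a_1$ the other way.

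Next I set up $f$. I define $f:\widetilde{M}/I\widetilde{M}\to (M/(I+(a_0))M)\left\langle 1\right\rangle\{1-\deg_a a_0,\,1-\deg_x a_0\}$ as the projection onto the $K_1$-summand followed by reduction modulo $a_0$. Since $K_1$ carries $\zed_2$-degree $1$ and grading shift $\{1-\deg_a a_0,1-\deg_x a_0\}$, the identification $\overline{M}\otimes K_1=\overline{M}\left\langle 1\right\rangle\{1-\deg_a a_0,1-\deg_x a_0\}$ shows $f$ is grading-preserving, and after reducing mod $a_0$ the target is exactly $(M/(I+(a_0))M)\left\langle 1\right\rangle\{\cdots\}$. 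A short check shows $f$ is a chain map: on a $K_1$-element the only surviving piece of $Df$ is the $d_M$-term (the $K_1\to K_0$ Koszul term is killed by the projection), which matches the target differential; on a $K_0$-element the image of $Df$ is the $a_0$-term, killed by reduction mod $a_0$, while $f$ itself already vanishes there.

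The heart of the argument is that $f$ is a quasi-isomorphism. One checks directly that $\ker f=(\overline{M}\otimes K_0)\oplus(a_0\overline{M}\otimes K_1)$ is a subcomplex (if the $K_1$-component lies in $a_0\overline{M}$, so does its image under $d_M$ and under $a_0$), giving a short exact sequence of complexes $0\to\ker f\to\widetilde{M}/I\widetilde{M}\xrightarrow{f}(\text{target})\to 0$; by the long exact sequence it suffices to prove $\ker f$ is acyclic. Here I divide the $K_1$-component by $a_0$: the resulting $\overline{R}$-isomorphism identifies $\ker f$ with $\overline{M}\otimes_{\overline{R}}(\,\pm 1,\pm a_0a_1\,)_{\overline{R}}$, because under division by $a_0$ the Koszul entry $a_0\colon K_0\to K_1$ becomes the unit $\pm 1$ while the entry $a_1\colon K_1\to K_0$ becomes $\pm a_0a_1$. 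A Koszul matrix factorization with a unit entry is contractible, and tensoring a contractible factorization with $\overline{M}$ keeps it contractible; hence $\ker f$ is acyclic and $f$ is a quasi-isomorphism, proving the lemma.

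The step I expect to demand the most care is the identification $\ker f\cong\overline{M}\otimes(\pm 1,\pm a_0a_1)_{\overline{R}}$: one must track the signs coming from the signed Leibniz rule and confirm that after dividing the $K_1$-component by $a_0$ the off-diagonal entry genuinely becomes a unit (equivalently, that the ``division by $a_0$'' is well defined on the image, which is precisely the injectivity of $a_0$ on the free module $\overline{M}$). An equivalent but more bookkeeping-heavy route is to cancel the isomorphism $a_0\colon\overline{M}\otimes K_0\to a_0\overline{M}\otimes K_1$ in both parities by Gaussian elimination; the only subtlety there is verifying that cancelling the even occurrence does not disturb the odd one, which holds because the correction term runs through summands of opposite $\zed_2$-parity and therefore vanishes. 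Either way the entire statement reduces to the single fact that $a_0$ acts injectively on $\overline{M}$.
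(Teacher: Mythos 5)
Your proof is correct. There is nothing in the paper to compare it against: the paper does not prove Lemma \ref{lemma-contraction-weak}, but instead cites \cite[Proposition 9]{KR1} and points to \cite[Proposition 2.17]{Wu-triple-trans} for a proof. Your argument --- writing $\widetilde{M}/I\widetilde{M}=\overline{M}\otimes_{\overline{R}}(a_0,a_1)_{\overline{R}}$, taking $f$ to be projection onto the $K_1$-summand followed by reduction modulo $a_0$, and showing $\ker f$ is contractible by dividing its $K_1$-component by $a_0$ (legitimate since $a_0$ acts injectively on the free $\overline{R}$-module $\overline{M}$) to identify it with $\overline{M}\otimes_{\overline{R}}(1,a_0a_1)_{\overline{R}}$ --- is exactly the standard argument underlying those references, and every step checks out; indeed the signs come out on the nose (no $\pm$ ambiguity), and the grading shift introduced by the division by $a_0$ is harmless because only acyclicity of $\ker f$, not a graded identification, is needed.
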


\subsection{Matrix factorization of MOY graphs}

\begin{figure}[ht]
\[
\xymatrix{
\setlength{\unitlength}{1pt}
\begin{picture}(60,40)(-30,0)

\put(-20,0){\vector(1,1){20}}

\put(0,20){\vector(1,1){20}}

\put(20,0){\vector(-1,1){20}}

\put(0,20){\vector(-1,1){20}}

\end{picture}
}
\]
\caption{}\label{fig-MOY-vertex}

\end{figure}

\begin{definition}\label{def-MOY}
An MOY graph $\Gamma$ is an embedding of a directed $4$-valent graph in the plane so that each vertex of $\Gamma$ looks like the one in Figure \ref{fig-MOY-vertex}.

A marking of an MOY graph $\Gamma$ consists of 
\begin{itemize}
	\item a finite collection of marked points in the interiors of the edges of $\Gamma$ such that each edge of $\Gamma$ contains at least one marked point,
	\item an assignment that assigns to each marked point a distinct variable.
\end{itemize}
\end{definition}

Fix an MOY graph $\Gamma$ and a marking of $\Gamma$. Assume that $x_1,\dots,x_n$ are the variables assigned to marked points in $\Gamma$. Let $R=\Q[a,x_1,\dots,x_n]$ with the $\zed^2$-grading so that $a,x_1,\dots,x_n$ are homogeneous and $\deg a =(2,0)$, $\deg x_1=\cdots=\deg x_n =(0,2)$. Cut $\Gamma$ at all of its marked points. We get a collection of pieces $\Gamma_1,\dots,\Gamma_m$, each of which is of one of the two types in Figure \ref{fig-MOY-pieces}.

\begin{figure}[ht]

\[
\xymatrix{
\setlength{\unitlength}{1pt}
\begin{picture}(60,45)(-30,-15)

\put(-20,15){\vector(1,0){40}}

\put(-30,15){\small{$x_i$}}

\put(23,15){\small{$x_s$}}

\put(-4,-15){$\Gamma_{i;s}$}

\end{picture} && \setlength{\unitlength}{1pt}
\begin{picture}(70,45)(-35,-15)

\put(-20,0){\vector(4,3){20}}

\put(-20,30){\vector(4,-3){20}}

\put(0,15){\vector(4,3){20}}

\put(0,15){\vector(4,-3){20}}

\put(-35,0){\small{$x_j$}}

\put(-35,25){\small{$x_i$}}

\put(28,0){\small{$x_t$}}

\put(28,25){\small{$x_s$}}

\put(-4,-15){$\Gamma_{i,j;s,t}$}

\end{picture}
}
\]

\caption{}\label{fig-MOY-pieces}

\end{figure}

\begin{itemize}
	\item If $\Gamma_q = \Gamma_{i;s}$ in Figure \ref{fig-MOY-pieces}, then $R_q = \Q[a,x_i,x_s]$ and $\fC_0(\Gamma_q) = (a,x_s-x_i)_{R_q}$.
	\item If $\Gamma_q = \Gamma_{i,j;s,t}$ in Figure \ref{fig-MOY-pieces}, then $R_q = \Q[a,x_i,x_j,x_s,x_t]$ and
	\[
	\fC_0(\Gamma_q) = \left(%
  \begin{array}{cc}
  a, & x_s+x_t-x_i-x_j \\
  0, & x_sx_t-x_ix_j
  \end{array}%
  \right)_{R_q}\{0,-1\}.
	\]
\end{itemize}

\begin{definition}\label{def-mf-MOY}
\[
\fC_0(\Gamma) = \bigotimes_{q=1}^m (\fC_0(\Gamma_q) \otimes_{R_q} R),
\]
where the big tensor product ``$\bigotimes_{q=1}^m$" is taken over the ring $R=\Q[a,x_1,\dots,x_n]$.

Note that $\fC_0(\Gamma)$ is a $\zed_2\times\zed^{2}$-graded matrix factorization of $0$.
\end{definition}

\begin{figure}[ht]
$
\xymatrix{
\setlength{\unitlength}{1pt}
\begin{picture}(60,55)(-30,-15)

\put(-20,0){\vector(0,1){40}}

\put(20,0){\vector(0,1){40}}

\put(-30,35){\small{$x_s$}}

\put(-30,0){\small{$x_i$}}

\put(23,35){\small{$x_t$}}

\put(23,0){\small{$x_j$}}

\put(-4,-15){$\Gamma_0$}

\end{picture} \ar@<8ex>[rr]^{\chi^0} && \setlength{\unitlength}{1pt}
\begin{picture}(60,55)(-30,-15)

\put(-20,0){\vector(1,1){20}}

\put(20,0){\vector(-1,1){20}}

\put(0,20){\vector(-1,1){20}}

\put(0,20){\vector(1,1){20}}

\put(-30,35){\small{$x_s$}}

\put(-30,0){\small{$x_i$}}

\put(23,35){\small{$x_t$}}

\put(23,0){\small{$x_j$}}

\put(-4,-15){$\Gamma_1$}

\end{picture} \ar@<-6ex>[ll]^{\chi^1}
}
$
\caption{}\label{def-chi-fig}

\end{figure}

The following lemma is established by Khovanov and Rozansky in \cite{KR2}. For a detailed proof in the present normalization, see \cite[Lemma 3.15]{Wu-triple-trans}.

\begin{lemma}\label{lemma-def-chi}
Let $\Gamma_0$ and $\Gamma_1$ be the graphs marked at their endpoints in Figure \ref{def-chi-fig}. Then there exist homogeneous morphisms of $\zed_2 \times \zed^{2}$-graded matrix factorizations $\fC_0(\Gamma_0) \xrightarrow{\chi^0} \fC_0(\Gamma_1)$ and $\fC_0(\Gamma_1) \xrightarrow{\chi^1} \fC_0(\Gamma_0)$ of degree $(0,1)$ satisfying:
\begin{enumerate}
  \item $\chi^0$ and $\chi^1$ are homotopically non-trivial,
	\item $\chi^1 \circ \chi^0 \simeq (x_j-x_s)\id_{\fC_0(\Gamma_0)}$ and $\chi^0 \circ \chi^1 \simeq (x_j-x_s)\id_{\fC_0(\Gamma_1)}$.
\end{enumerate}
\end{lemma}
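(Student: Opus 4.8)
The plan is to compute everything in the explicit Koszul description of Definition \ref{def-mf-MOY} and to let the $\zed^2$-grading do most of the bookkeeping. Write $p=x_s+x_t-x_i-x_j$ and $q=x_sx_t-x_ix_j$, so that both $\fC_0(\Gamma_0)$ and $\fC_0(\Gamma_1)$ are matrix factorizations of $w=ap$. Directly from the definitions, $\fC_0(\Gamma_0)=(a,x_s-x_i)_R\otimes_R(a,x_t-x_j)_R$ and $\fC_0(\Gamma_1)=(a,p)_R\otimes_R(0,q)_R\{0,-1\}$. First I would perform a single row operation on the Koszul matrix of $\fC_0(\Gamma_0)$ (a standard grading-preserving isomorphism of Koszul matrix factorizations): replace the rows $(a,x_s-x_i),(a,x_t-x_j)$ by $(a,p),(0,x_t-x_j)$, which keeps the potential $ap$ fixed. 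Thus $\fC_0(\Gamma_0)\cong(a,p)_R\otimes_R(0,x_t-x_j)_R$, and now both objects carry the common factor $(a,p)_R$ and differ only in their second factor, $(0,x_t-x_j)$ versus $(0,q)$.

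With explicit bases for the two rank-four factorizations in hand, I would next write down the differentials and look for homogeneous morphisms $\chi^0,\chi^1$ of degree $(0,1)$. The essential point is that the $\zed^2$-grading — together with the shift $\{0,-1\}$ on $\fC_0(\Gamma_1)$, which is exactly what permits an odd $x$-degree map — forces every matrix entry of $\chi^0$ and $\chi^1$ to be either a scalar or a linear form in $x_i,x_j,x_s,x_t$. This cuts the commutation relations $d\,\chi=\chi\,d$ down to a small linear system, and the one nontrivial identity needed to solve it is
\[
q=x_t\,p-(x_t-x_i)(x_t-x_j),
\]
equivalently $q\equiv-(x_t-x_i)(x_t-x_j)\pmod p$, which is what lets the ``$q$'' of $\fC_0(\Gamma_1)$ be matched against the ``$x_t-x_j$'' of $\fC_0(\Gamma_0)$. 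This yields explicit maps; for example $\chi^0$ sends the bottom generator $\mathbf 1$ to $(x_i-x_t)$ times the bottom generator of $\fC_0(\Gamma_1)$. I would then confirm $d\,\chi^0=\chi^0\,d$ and $d\,\chi^1=\chi^1\,d$ on each basis element by a short direct check.

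For part (2) I would simply compose the explicit maps. A direct calculation gives $\chi^1\circ\chi^0=(x_t-x_i)\,\id_{\fC_0(\Gamma_0)}$ and $\chi^0\circ\chi^1=(x_t-x_i)\,\id_{\fC_0(\Gamma_1)}$ on the nose, for a suitable sign normalization of $\chi^1$. To reconcile this with the stated $(x_j-x_s)\,\id$, observe that $(x_t-x_i)-(x_j-x_s)=x_s+x_t-x_i-x_j=p$, and $p$ is an entry of the Koszul matrices above, so multiplication by $p$ is null-homotopic by Lemma \ref{entries-null-homotopic}; hence $(x_t-x_i)\,\id\simeq(x_j-x_s)\,\id$, which is precisely the source of the homotopy ``$\simeq$'' in the statement. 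Part (1) then follows formally once I know that multiplication by $x_t-x_i$ is \emph{not} null-homotopic on $\fC_0(\Gamma_0)$: reducing modulo $a$ turns $\fC_0(\Gamma_0)$ into a Koszul complex with homology $\Q[x_i,x_j]$ (setting $x_s=x_i$, $x_t=x_j$), on which $x_t-x_i$ acts as the nonzero element $x_j-x_i$; since a null-homotopic endomorphism induces the zero map on homology, multiplication by $x_t-x_i$ is essential. Were $\chi^0$ (or $\chi^1$) null-homotopic, the composition $(x_t-x_i)\,\id$ would be too, a contradiction, so both maps are homotopically nontrivial.

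The main obstacle is the middle step: organizing the commutation equations and solving them with the correct signs coming from the Leibniz rule on the tensor product. The grading is what makes this tractable — without the degree-$(0,1)$ restriction the space of candidate morphisms is infinite-dimensional, whereas the grading pins the unknowns down to a handful of scalars and linear forms, after which the single identity $q=x_t\,p-(x_t-x_i)(x_t-x_j)$ closes the system. A more structural alternative would be to contract the common factor $(a,p)_R$ via Lemma \ref{lemma-contraction-weak} (the variable $a$ is a non-zero-divisor) to reduce both sides to two-term complexes before building the maps; I would keep this in reserve as a cross-check, but the explicit Koszul computation above is the most direct route to maps satisfying all three requirements at once.
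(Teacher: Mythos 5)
Your argument should first be measured against the right target: the paper itself offers no proof of Lemma \ref{lemma-def-chi} at all --- it defers entirely to \cite{KR2} and to \cite[Lemma 3.15]{Wu-triple-trans} --- so the comparison is with the standard explicit construction in those references. By that measure your proof is correct, and it is essentially a self-contained reconstruction of that construction, organized somewhat more systematically. The row operation replacing the rows $(a,x_s-x_i),(a,x_t-x_j)$ of $\fC_0(\Gamma_0)$ by $(a,p),(0,x_t-x_j)$ is indeed a grading-preserving isomorphism of Koszul matrix factorizations (the change of basis is unipotent and homogeneous, since both rows have generator shift $\{-1,1\}$). Your identity $q=x_t\,p-(x_t-x_i)(x_t-x_j)$ is exactly what makes the degree-$(0,1)$ commutation system solvable, and the system does admit a solution with $\chi^1\circ\chi^0=(x_t-x_i)\,\id$ and $\chi^0\circ\chi^1=(x_t-x_i)\,\id$ on the nose, as you claim; the grading really does pin all matrix entries down to scalars and linear forms, so the check is finite. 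Your reconciliation step is the one delicate point and you handled it correctly: $(x_t-x_i)-(x_j-x_s)=p$, and $p$ lies in the ideal of Koszul entries of both $\fC_0(\Gamma_0)$ (as $(x_s-x_i)+(x_t-x_j)$) and $\fC_0(\Gamma_1)$ (as an entry itself), so Lemma \ref{entries-null-homotopic} gives $(x_t-x_i)\,\id\simeq(x_j-x_s)\,\id$ on both objects. Note that the sign issue you flag is genuinely substantive, not cosmetic: $(x_j-x_s)\,\id\not\simeq(x_s-x_j)\,\id$, because $x_j-x_s$ does \emph{not} lie in those ideals, so the ``suitable sign normalization of $\chi^1$'' is necessary to land on the homotopy class the lemma asserts. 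Finally, your non-triviality argument --- reduce mod $a$, identify the homology of the resulting Koszul complex with $\Q[x_i,x_j]$, and observe that $x_t-x_i$ acts there as multiplication by $x_j-x_i$, which is injective --- is sound, and is more explicit than what the cited sources spell out.

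One caution on your ``reserve'' alternative: Lemma \ref{lemma-contraction-weak} only produces a quasi-isomorphism between \emph{quotient} complexes, not a homotopy equivalence of the matrix factorizations themselves, so contracting the common factor $(a,p)_R$ with it would not by itself let you transport morphisms built on the contracted objects back to honest morphisms $\fC_0(\Gamma_0)\to\fC_0(\Gamma_1)$ of matrix factorizations, which is what the lemma requires. Keep that route as a consistency check on graded dimensions only; your explicit Koszul construction is the proof.
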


\subsection{Chain complex of closed braid diagrams}

\begin{definition}\label{def-marking-braid}
For a closed braid $B$, an arc of $B$ is a part of $B$ that starts and ends at crossings and contains no crossing in its interior. A marking of $B$ consists of
\begin{itemize}
	\item a finite collection of marked points in the interiors of the arcs of $B$ such that each arc of $B$ contains at least one marked point,
	\item an assignment that assigns to each marked point a distinct variable.
\end{itemize}
\end{definition}

Let $B$ be a closed braid with a marking. Assume $x_1,\dots,x_n$ are all the variables assigned to marked points in $B$. The ring $R:=\Q[a,x_1,\dots,x_n]$ is graded so that $\deg a = (2,0)$ and $\deg x_j =(0,2)$ for $j=1,\dots,n$.

Cut $B$ at all of its marked points. This cuts $B$ into a collection $\{T_1,\dots,T_m\}$ of simple tangles, each of which is of one of the three types in Figure \ref{tangle-pieces-fig} and is marked only at its end points.

\begin{figure}[ht]
$
\xymatrix{
\setlength{\unitlength}{1pt}
\begin{picture}(60,55)(-30,-15)

\put(0,0){\vector(0,1){40}}

\put(-10,35){\small{$x_s$}}

\put(-10,0){\small{$x_i$}}

\put(-4,-15){$A$}

\end{picture} && \setlength{\unitlength}{1pt}
\begin{picture}(60,55)(-30,-15)

\put(-20,0){\vector(1,1){40}}

\put(20,0){\line(-1,1){18}}

\put(-2,22){\vector(-1,1){18}}

\put(-30,35){\small{$x_s$}}

\put(-30,0){\small{$x_i$}}

\put(23,35){\small{$x_t$}}

\put(23,0){\small{$x_j$}}

\put(-4,-15){$C_+$}

\end{picture} && \setlength{\unitlength}{1pt}
\begin{picture}(60,55)(-30,-15)

\put(20,0){\vector(-1,1){40}}

\put(-20,0){\line(1,1){18}}

\put(2,22){\vector(1,1){18}}

\put(-30,35){\small{$x_s$}}

\put(-30,0){\small{$x_i$}}

\put(23,35){\small{$x_t$}}

\put(23,0){\small{$x_j$}}

\put(-4,-15){$C_-$}

\end{picture} 
}
$
\caption{}\label{tangle-pieces-fig}

\end{figure}

If $T_q=A$, then $R_q =\Q[a,x_i,x_s]$ and $\fC_0(T_q)$ is the chain complex
\begin{equation}\label{eq-def-chain-arc}
\fC_0(A)= 0 \rightarrow \underbrace{\fC_0(A)}_{0} \rightarrow 0, 
\end{equation}
where the $\fC_0(A)$ on the right hand side is the matrix factorization associated to the MOY graph $A$, and the under-brace indicates the homological grading.

\begin{figure}[ht]
$
\xymatrix{
&& \setlength{\unitlength}{1pt}
\begin{picture}(60,55)(-30,-15)

\put(-20,0){\vector(1,1){40}}

\put(20,0){\line(-1,1){18}}

\put(-2,22){\vector(-1,1){18}}

\put(-30,35){\small{$x_s$}}

\put(-30,0){\small{$x_i$}}

\put(23,35){\small{$x_t$}}

\put(23,0){\small{$x_j$}}

\put(-4,-15){$C_+$}

\end{picture}  \ar@<-12ex>[lld]_{0}\ar@<12ex>[rrd]^{+1} && \\
 \setlength{\unitlength}{1pt}
\begin{picture}(60,55)(-30,-15)

\put(-20,0){\vector(0,1){40}}

\put(20,0){\vector(0,1){40}}

\put(-30,35){\small{$x_s$}}

\put(-30,0){\small{$x_i$}}

\put(23,35){\small{$x_t$}}

\put(23,0){\small{$x_j$}}

\put(-4,-15){$\Gamma_0$}

\end{picture}&&&& \setlength{\unitlength}{1pt}
\begin{picture}(60,55)(-30,-15)

\put(-20,0){\vector(1,1){20}}

\put(20,0){\vector(-1,1){20}}

\put(0,20){\vector(-1,1){20}}

\put(0,20){\vector(1,1){20}}

\put(-30,35){\small{$x_s$}}

\put(-30,0){\small{$x_i$}}

\put(23,35){\small{$x_t$}}

\put(23,0){\small{$x_j$}}

\put(-4,-15){$\Gamma_1$}

\end{picture} \\
&& \setlength{\unitlength}{1pt}
\begin{picture}(60,55)(-30,-15)

\put(20,0){\vector(-1,1){40}}

\put(-20,0){\line(1,1){18}}

\put(2,22){\vector(1,1){18}}

\put(-30,35){\small{$x_s$}}

\put(-30,0){\small{$x_i$}}

\put(23,35){\small{$x_t$}}

\put(23,0){\small{$x_j$}}

\put(-4,-15){$C_-$}

\end{picture} \ar[llu]_{0} \ar[rru]^{-1} &&
}
$
\caption{}\label{crossing-res-fig}

\end{figure}

If $T_q=C_\pm$, then $R_q = \Q[a,x_i,x_j,x_s,x_t]$ and $\fC_0(T_q)$ is the chain complex
\begin{eqnarray}
\label{eq-def-chain-crossing+} \fC_0(C_+) & = & 0 \rightarrow \underbrace{\fC_0(\Gamma_1)\left\langle 1\right\rangle\{1,0\}}_{-1} \xrightarrow{\chi^1} \underbrace{\fC_0(\Gamma_0)\left\langle 1\right\rangle\{1,-1\}}_{0} \rightarrow 0, \\
\label{eq-def-chain-crossing-} \fC_0(C_-) & = & 0 \rightarrow \underbrace{\fC_0(\Gamma_0)\left\langle 1\right\rangle\{-1,1\}}_{0} \xrightarrow{\chi^0} \underbrace{\fC_0(\Gamma_1)\left\langle 1\right\rangle\{-1,0\}}_{1} \rightarrow 0,
\end{eqnarray}
where the morphisms $\chi^0$ and $\chi^1$ are defined in Lemma \ref{lemma-def-chi} and, again, the under-braces indicate the homological gradings, which is also indicated by the labels on the arrows in Figure \ref{crossing-res-fig}.

\begin{definition}\label{def-homology-braid}
We define the chain complex $\fC_0(B)$ associated to $B$ to be 
\[
\fC_0(B) := \bigotimes_{q=1}^{m} (\fC_0(T_q)\otimes_{R_q} R),
\]
where the big tensor product ``$\bigotimes_{q=1}^{m}$" is taken over $R$. 

$\fC_0(B)$ is a chain complex of $\zed_2\times \zed^{2}$-graded matrix factorizations of $0$. It has two differentials: a differential $d_{mf}$ from its underlying matrix factorization structure and a differential $d_\chi$ from the local chain complexes \eqref{eq-def-chain-arc}-\eqref{eq-def-chain-crossing-}. These two differentials commute.

The HOMFLYPT homology $\fH_0(B)$ is defined to be $\fH_0(B):= H(H(\fC_0(B),d_{mf}),d_\chi)$. Since $d_{mf}$ and $d_\chi$ are both homogeneous with respect to the $\zed_2\times \zed^{3}$-grading of $\fC_0(B)$, $\fH_0(B)$ inherits this $\zed_2\times \zed^{3}$-grading. We call the three $\zed$-gradings the homological grading, the $a$-grading and the $x$-grading.
\end{definition}

\begin{theorem}\cite{KR2}\label{cor-homology-inv}
Up to isomorphism of $\zed_2\times \zed^{3}$-graded $\Q$-spaces, $\fH_0(B)$ is independent of the marking of $B$ and invariant under transverse Markov moves\footnote{Transverse Markov moves will be reviewed in Subsection \ref{subsec-trans} below.}.

If $B'$ is obtained from $B$ by a negative stabilization, then $\fH_0(B') \cong \fH_0(B)\|1\| \{-1,-1\}$, where $\|s\|$ means shifting the homological grading up by $s$.
\end{theorem}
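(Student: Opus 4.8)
The plan is to treat the two assertions in turn, relying throughout on the matrix-factorization calculus already in place: the Koszul manipulations of Lemma \ref{entries-null-homotopic}, the excision of variables in Lemma \ref{lemma-contraction-weak}, the identities for $\chi^0,\chi^1$ in Lemma \ref{lemma-def-chi}, and Gaussian elimination of contractible direct summands of complexes of matrix factorizations. I would first prove independence of the marking, so as to be free to choose convenient markings in every later step. Inserting an extra marked point carrying a fresh variable $x_m$ on an arc replaces one edge factor $(a,x_s-x_i)_R$ by the tensor product $(a,x_m-x_i)_R\otimes_R(a,x_s-x_m)_R$. Since $x_m-x_i$ is not a zero-divisor, Lemma \ref{lemma-contraction-weak} supplies a grading-preserving quasi-isomorphism that excises $x_m$ and returns the original factorization up to the bookkeeping shift recorded there; checking that these shifts cancel is routine. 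Iterating shows that any two markings give isomorphic $\fH_0(B)$.

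For the transverse Markov moves I would, in each case, expand both closed braids as tensor products of the local complexes \eqref{eq-def-chain-arc}--\eqref{eq-def-chain-crossing-} and exhibit a homotopy equivalence of the resulting complexes of matrix factorizations; the isomorphism of the double homologies $\fH_0$ then follows. The substantive inputs are the categorified MOY relations, namely direct-sum decompositions of $\fC_0(\Gamma)$ for the bigon and square graphs, each of which I would derive from Lemmas \ref{entries-null-homotopic} and \ref{lemma-contraction-weak} by row and column operations on the Koszul matrices. Far-commutativity $\sigma_i\sigma_j\simeq\sigma_j\sigma_i$ for $|i-j|\ge 2$ is immediate from disjointness of the tensor factors; conjugation and Reidemeister II ($\sigma_i\sigma_i^{-1}\simeq 1$) then reduce to Gaussian elimination once the bigon decomposition is in hand, with the identities $\chi^1\chi^0\simeq(x_j-x_s)\,\id$ and $\chi^0\chi^1\simeq(x_j-x_s)\,\id$ controlling the surviving differentials.

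I expect Reidemeister III, the braid relation $\sigma_i\sigma_{i+1}\sigma_i\simeq\sigma_{i+1}\sigma_i\sigma_{i+1}$, to be the main obstacle: both complexes have several terms, and the equivalence requires a careful matching of MOY summands across the two sides together with homotopies assembled from $\chi^0,\chi^1$ and the square-decomposition isomorphisms. I would attack it by Gaussian elimination down to a common reduced complex, or, if that proves unwieldy, first establish the simpler fork-slide equivalences for the double-edge factorization $\fC_0(\Gamma_{i,j;s,t})$ and deduce R3 from them.

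Finally, the stabilization statements are local computations. Positive stabilization attaches a single positive crossing closing off a new strand; after simplification the new strand's factorization splits so that one MOY summand is killed by Gaussian elimination while the other reproduces $\fH_0(B)$ on the nose, yielding transverse Markov invariance. Negative stabilization runs the same computation with $\chi^0$ and $\chi^1$, hence $\Gamma_0$ and $\Gamma_1$, interchanged; now the surviving summand carries the shift $\|1\|\{-1,-1\}$, reflecting that this move lowers the self-linking number and is therefore not a transverse isotopy. The precise shift I would pin down by tracking the grading conventions of \eqref{eq-def-chain-crossing+}--\eqref{eq-def-chain-crossing-}.
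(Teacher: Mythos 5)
First, a point of comparison: the paper does not prove this statement at all. Theorem \ref{cor-homology-inv} is imported wholesale from \cite{KR2}, with the verification for the $\fH_0$ normalization carried out in \cite{Wu-triple-trans}; there is no in-paper argument to match your proposal against. What you have written is an outline of that literature proof, and its architecture is indeed the standard one: marking independence via variable exclusion, the braid-like Reidemeister moves via categorified MOY decompositions and Gaussian elimination, conjugation essentially for free from the closed-diagram tensor structure, and stabilization by a local reduction. Your marking-independence sketch is essentially right (one small correction: Lemma \ref{lemma-contraction-weak} contracts the \emph{first} entry $a_0$ of a Koszul row, so you must first swap $(a,x_m-x_i)_R$ into the form $(x_m-x_i,a)_R$; the swap shift $\left\langle 1\right\rangle\{-1,1\}$ then cancels against the exclusion shift $\left\langle 1\right\rangle\{1,-1\}$, which is the cancellation you call routine).

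The genuine gaps are exactly at the points you defer. (1) The braid relation R3 is the crux of the whole theorem, and your proposal does not prove it; it names two strategies ("Gaussian elimination down to a common reduced complex" or fork-slide equivalences) without carrying either out. In the triply-graded setting this step cannot be dispatched by the lemmas quoted in this paper: it requires the direct-sum decomposition of $\fC_0$ of the square graph (or equivalently Rouquier-type braiding arguments), which is itself a substantial theorem whose proof occupies a large part of \cite{KR2} and \cite{Wu-triple-trans}, and which differs in form from the $\slmf_N$ case — note, e.g., that already $\fH_0(U)\cong\Q[x]\{-1,1\}$ is infinite dimensional, so the familiar $\slmf_N$ MOY calculus does not transfer verbatim. (2) Positive stabilization is likewise asserted rather than computed: "the new strand's factorization splits so that one MOY summand is killed by Gaussian elimination" is precisely the statement that needs proof. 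The actual content is computing the homology of the MOY resolution in which the new strand is closed off and showing that the relevant component of $d_\chi$ becomes a homotopy equivalence (not merely a quasi-isomorphism) on that summand, so that Gaussian elimination is legitimate; none of Lemmas \ref{entries-null-homotopic}--\ref{lemma-def-chi} delivers this. A related technical point you pass over silently: your reductions produce quasi-isomorphisms and homotopy equivalences with respect to $d_{mf}$, and one must check at each stage that they are compatible with $d_\chi$ (and that tensoring the local equivalences over $R$ with the remaining, $2$-periodic, factors preserves them) before concluding anything about the double homology $H(H(\fC_0(B),d_{mf}),d_\chi)$. So your proposal is a correct roadmap of the known proof, but as a proof it is incomplete precisely where the difficulty lies.
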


\begin{remark}\label{remark-poly-grading-even}
Note that, for $\fC_0(B)$, the parity of its $a$-grading $=$ the parity of its $x$-grading $=$ its $\zed_2$-grading. The polynomial grading of $\fC_0(B)$ is the sum of the $a$-grading and the $x$-grading. So the polynomial grading of $\fC_0(B)$ is always even. Moreover, with the polynomial grading, $\fC_0(B)$ is a graded $R=\Q[a,x_1,\dots,x_n]$-module, whose grading is even. Since $d_{mf}$ and $d_\chi$ are both homogeneous $R$-modules homomorphisms, $\fH_0(B)$ is also a graded $R=\Q[a,x_1,\dots,x_n]$-module, whose polynomial grading is even.
\end{remark}

\subsection{Upper bound of the degree of the Hilbert polynomial of $\fH_0(B)$}

We are now ready to show that the degree of the Hilbert polynomial of $\fH_0(B)$ is at most $l-1$, where $l$ is the number of components of $B$. Our proof is based on \cite[Lemma 3.4]{Ras-2-bridge} by Rasmussen. For the convenience of the reader, we include the proof of this lemma for $\fH_0$ below.

\begin{lemma}\cite[Lemma 3.4]{Ras-2-bridge}\label{lemma-ras-components}
Let $B$ be a closed braid with a marking. If $x$ and $y$ are variables assigned to marked points on the same component of $B$, then multiplications by $x$ and $y$ induce the same endomorphism of $\fH_0(B)$.
\end{lemma}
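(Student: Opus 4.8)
The plan is to prove the stronger local statement that, whenever two marked points are \emph{adjacent} along a component of $B$, the corresponding variables induce equal multiplications on $\fH_0(B)$, and then to propagate this equality around each component by transitivity. Here two marked points are adjacent if they are the two endpoints of a single arc-tangle $A=\Gamma_{i;s}$, or if they are the entry and exit points of one strand passing through a single crossing-tangle $C_\pm$. Since every component of $B$ is a closed curve that its marked points cut into a cyclic sequence of such arc- and crossing-tangles, any two marked points on it are joined by a chain of adjacent pairs; equality of the induced multiplications is preserved under composition, so it suffices to treat the two local cases. Throughout I will use the two elementary reductions afforded by the iterated-cone structure $\fH_0(B)=H(H(\fC_0(B),d_{mf}),d_\chi)$: multiplication by $r\in R$ induces $0$ on $\fH_0(B)$ as soon as either (a) it is null-homotopic with respect to $d_{mf}$ (then it already vanishes on $H(\fC_0(B),d_{mf})$, hence on $\fH_0(B)$), or (b) on $H(\fC_0(B),d_{mf})$ it equals $d_\chi s + s d_\chi$ for some $s$ commuting with $d_{mf}$ (then it is a $d_\chi$-boundary and again vanishes on $\fH_0(B)$).

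For the arc case, the tangle $A=\Gamma_{i;s}$ contributes the Koszul factor $\fC_0(\Gamma_{i;s})=(a,x_s-x_i)_{R_q}$, whose defining entries include $x_s-x_i$. Lemma \ref{entries-null-homotopic} then shows that multiplication by $x_s-x_i$ is $d_{mf}$-null-homotopic on this tensor factor, hence on all of $\fC_0(B)$. By reduction (a), multiplications by $x_i$ and $x_s$ agree on $\fH_0(B)$.

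The crossing case splits into the two strands. For the under-strand, whose endpoints are $x_j$ and $x_s$, I would use Lemma \ref{lemma-def-chi}: the relations $\chi^0\circ\chi^1\simeq (x_j-x_s)\,\id_{\fC_0(\Gamma_1)}$ and $\chi^1\circ\chi^0\simeq (x_j-x_s)\,\id_{\fC_0(\Gamma_0)}$ show that, after passing to $d_{mf}$-homology, multiplication by $x_j-x_s$ on the local complex $\fC_0(C_\pm)$ equals exactly $\bar d_\chi\,\overline{\chi^0}+\overline{\chi^0}\,\bar d_\chi$; since $\chi^0$ is a morphism of matrix factorizations it descends to $H(\cdot,d_{mf})$, so reduction (b) applies and $x_j=x_s$ on $\fH_0(B)$. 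For the over-strand, whose endpoints are $x_i$ and $x_t$, there is no direct $\chi$-relation, so instead I would observe that $x_s+x_t-x_i-x_j$ lies in the ideal generated by the Koszul entries of both $\fC_0(\Gamma_0)$ and $\fC_0(\Gamma_1)$; by Lemma \ref{entries-null-homotopic} and reduction (a) it follows that $x_s+x_t=x_i+x_j$ on $\fH_0(B)$. Combining this with the already-established $x_s=x_j$ yields $x_t=x_i$, completing the crossing case. The same argument handles $C_-$ with the roles of $\chi^0$ and $\chi^1$ interchanged.

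The step I expect to be the main obstacle is the passage from the local homotopies to global ones in reduction (b), i.e. checking that the chain homotopy $\chi^0$, supported on a single crossing factor, really reproduces multiplication by $x_j-x_s$ on $H(\fC_0(B),d_{mf})$ for the \emph{global} differential $d_\chi=\sum_c d_\chi^{(c)}$. This requires the Koszul sign bookkeeping of the Leibniz rule to confirm that in $d_\chi\,(\chi^0\otimes\id)+(\chi^0\otimes\id)\,d_\chi$ the contributions of all crossings other than the one under consideration cancel, leaving only the intended local term. By comparison the arc and symmetric-function arguments in reduction (a) are immediate, since a $d_{mf}$-null-homotopy supported on one tensor factor tensors up to a $d_{mf}$-null-homotopy on $\fC_0(B)$ with no compatibility with $d_\chi$ required.
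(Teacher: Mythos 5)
Your proposal is correct and follows essentially the same route as the paper: reduce to adjacent marked points on a single tangle, handle the arc case and the over-strand relation $x_s+x_t-x_i-x_j$ via Lemma~\ref{entries-null-homotopic}, and treat the remaining strand by using $\chi^0\otimes\id$ as a $d_\chi$-homotopy for multiplication by $x_j-x_s$ on $H(\fC_0(B),d_{mf})$, via Lemma~\ref{lemma-def-chi}. The sign cancellation you flag as the main obstacle is real but is exactly what the paper verifies in its identity $d_{0,-1}\circ d_0 = -d_{-1}\circ d_{0,-1}$, after which the paper's explicit computation on $d_\chi$-closed classes is just your reduction (b) spelled out.
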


\begin{proof} (Following \cite[Lemma 3.4]{Ras-2-bridge}.)
Assume $x_1,\dots,x_n$ are all the variables assigned to marked points in $B$. Let $R=\Q[a,x_1,\dots,x_n]$.

Cut $B$ at all of its marked points. This cuts $B$ into a collection $\{T_1,\dots,T_m\}$ of simple tangles, each of which is of one of the three types in Figure \ref{tangle-pieces-fig} and is marked only at its end points. It is easy to see that, to prove the lemma, we only need to prove it for the case when $x$ and $y$ are variables assigned to endpoints of the same component of a particular $T_q$. Without loss of generality, we assume that $x$ and $y$ are variables both assigned to endpoints $T_1$.

If $T_1=A$ in Figure \ref{tangle-pieces-fig}, then the lemma follows easily from Lemma \ref{entries-null-homotopic}.

Next assume $T_1$ is a crossing. The proofs for positive and negative crossings are very similar. We only give the proof for a positive crossing. 

Assume $T_1=C_+$ in Figure \ref{tangle-pieces-fig}. Denote by $\hat{d}_\chi$ the differential of the chain complex of matrix factorizations $\fC:=\bigotimes_{q=2}^{m} (\fC_0(T_q)\otimes_{R_q} R)$, where the big tensor product ``$\bigotimes_{q=2}^{m}$" is taken over $R$. According to chain complex \eqref{eq-def-chain-crossing+} of $C_+$, 
\begin{itemize}
	\item $\fC_0(B)= \fC' \oplus\fC''$, where $\fC'$ and $\fC''$ are chain complexes of matrix factorizations of $0$ given by
	\begin{itemize}
		\item $\fC' = \fC_0(\Gamma_1)\otimes_{R_1}\fC \left\langle 1\right\rangle\{1,0\}\|-1\|$,
		\item $\fC'' = \fC_0(\Gamma_0)\otimes_{R_1}\fC \left\langle 1\right\rangle\{1,-1\}$,
	\end{itemize}
	\item the differential $d_\chi$ of $\fC_0(B)$ is of the form $d\chi = d_{-1} + d_{-1,0} + d_0$, where $d_{-1},~d_{-1,0},~ d_0$ are chain maps given by
	\begin{itemize}
		\item $d_{-1} = -\id_{\fC_0(\Gamma_1)}\otimes \hat{d}_\chi:\fC' \rightarrow \fC'$,
		\item $d_{-1,0} = \chi^1 \otimes \id_{\fC}: \fC' \rightarrow \fC''$,
		\item $d_0 = \id_{\fC_0(\Gamma_0)}\otimes \hat{d}_\chi: \fC'' \rightarrow \fC''$.
	\end{itemize}
\end{itemize}
Define $d_{0,-1}$ to be the homomorphism $d_{0,-1} = \chi^0 \otimes \id_{\fC}: \fC'' \rightarrow \fC'$. Then $d_{0,-1} \circ d_0 = \chi^0 \otimes \hat{d}_\chi = - d_{-1} \circ d_{0,-1}$.

Now assume $\Lambda \in H(\fC_0(B),d_{mf})$ satisfies $d_\chi(\Lambda) =0$. Write $\Lambda = \Lambda' + \Lambda''$, where $\Lambda' \in H(\fC',d_{mf})$ and $\Lambda'' \in H(\fC'',d_{mf})$. Then $d_{-1} (\Lambda') = 0$ and $d_{-1,0} (\Lambda') + d_0 (\Lambda'')=0$. Now consider $d_{0,-1}(\Lambda'') \in H(\fC',d_{mf})$. We have
\begin{eqnarray*}
d\chi(d_{0,-1}(\Lambda'')) & = & d_{-1}(d_{0,-1}(\Lambda'')) + d_{-1,0}(d_{0,-1}(\Lambda'')) =-d_{0,-1} ( d_0(\Lambda'')) + d_{-1,0}(d_{0,-1}(\Lambda'')) \\
& = & d_{0,-1} ( d_{-1,0}(\Lambda')) + d_{-1,0}(d_{0,-1}(\Lambda'')).
\end{eqnarray*}
By Lemma \ref{lemma-def-chi}, $d_{0,-1} \circ d_{-1,0} = (x_j-x_s) \id_{\fC'}$ and $d_{-1,0}\circ d_{0,-1} = (x_j-x_s) \id_{\fC''}$. So 
\[
d\chi(d_{0,-1}(\Lambda'')) = (x_j-x_s) (\Lambda' + \Lambda'') = (x_j-x_s) \Lambda.
\]
This shows that the multiplication by $x_j-x_s$ on $\fH_0(B) = H(H(\fC_0(B),d_{mf}),d_\chi)$ is the zero endomorphism, which proves the lemma for the case $x=x_s$ and $y=x_j$. By Lemma \ref{entries-null-homotopic}, multiplication by $x_s+x_t-x_i-x_j$ is the zero endomorphism on $H(\fC_0(B),d_{mf})$. Thus follows the case when $x=x_t$ and $y=x_i$. This completes the proof of the lemma for $T_1=C_+$.
\end{proof}

\begin{corollary}\label{cor-Hilbert-at-most-l-1}
Let $B$ be a closed braid with $l$ components. Then the degree of the Hilbert polynomial $P_B(T)$ of $\fH_0(B)$ is at most $l-1$.
\end{corollary}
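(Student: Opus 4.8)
The plan is to exploit Lemma \ref{lemma-ras-components} to collapse the (typically large) number $n$ of marked-point variables acting on $\fH_0(B)$ down to the number $l$ of components, and then to invoke Hilbert's Syzygy Theorem in exactly $l$ variables. The whole point is that this reduction from $n$ to $l$ variables is what forces the sharp bound $l-1$ rather than the naive bound $n$ coming from $R$ itself.

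First I would choose one marked-point variable $y_c$ on each component $c=1,\dots,l$ of $B$. By Lemma \ref{lemma-ras-components}, every variable $x_i$ acts on $\fH_0(B)$ exactly as the chosen $y_c$ of the component containing it, so the $R=\Q[a,x_1,\dots,x_n]$-module structure of $\fH_0(B)$ (Remark \ref{remark-poly-grading-even}) factors through the subring $\tilde{S}=\Q[a,y_1,\dots,y_l]$. Next I would note that $\fH_0(B)$ is finitely generated over $R$: the Koszul matrix factorizations are finitely generated over $R$, tensor products preserve this, and since $R$ is Noetherian the subquotient $\fH_0(B)=H(H(\fC_0(B),d_{mf}),d_\chi)$ is finitely generated as well. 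Because the action factors through $\tilde{S}$, the module $\fH_0(B)$ is finitely generated over $\tilde{S}$.

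The middle step reduces to a fixed homological grading $i$ and $a$-grading $j$. Since the homological grading is bounded, the homological degree-$i$ part $\fH_0^{i,*,*}(B)$ is a finitely generated $\tilde{S}$-module. Since the $a$-grading is bounded, $a$ acts nilpotently, say $a^N=0$, so if $g_1,\dots,g_r$ generate over $\tilde{S}$ then the finitely many elements $a^k g_s$ with $0\le k<N$ generate $\fH_0^{i,*,*}(B)$ over $S=\Q[y_1,\dots,y_l]$. As each $y_c$ preserves the $a$-grading, $\fH_0^{i,*,*}(B)=\bigoplus_j \fH_0^{i,j,*}(B)$ is a direct sum of $S$-submodules indexed by the finitely many values of $j$; by Noetherianity each summand $M_{i,j}:=\fH_0^{i,j,*}(B)$ is then a finitely generated graded $S$-module for the $x$-grading, where $S$ has exactly $l$ variables, each of $x$-degree $2$.

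Finally I would apply Theorem \ref{thm-syzygy} to $M_{i,j}$, shifting the $x$-grading by $1$ when $j$ is odd so that the grading is even (recall $\fH_0^{i,j,k}(B)=0$ unless $j+k$ is even). This yields that $P_{B,i,j}(T)=\dim_\Q \fH_0^{i,j,2T-j}(B)$ agrees for $T\gg1$ with a polynomial of degree at most $l-1$. Summing over the finitely many pairs $(i,j)$ for which $P_{B,i,j}$ is nonzero (Lemma \ref{lemma-def-Hilbert-H-0}) gives $\deg P_B(T)\le l-1$. I expect the main obstacle to be this middle step: one must argue carefully, combining the boundedness of both the homological and $a$-gradings with Noetherianity, that fixing $(i,j)$ leaves a genuinely finitely generated graded module over the reduced ring $S$ in only $l$ variables. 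Everything downstream is then a direct appeal to the Syzygy Theorem.
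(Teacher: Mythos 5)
Your proposal is correct, and its skeleton is the same as the paper's: use Lemma \ref{lemma-ras-components} to make the $R$-action on $\fH_0(B)$ factor through a polynomial ring with one variable per component, observe that $\fH_0(B)$ is finitely generated because $\fC_0(B)$ is and $R$ is Noetherian, and then invoke Theorem \ref{thm-syzygy} in $l$ variables. The differences are in execution, and both favor the paper's version. First, the paper disposes of $a$ outright: since $a$ appears as an entry of every Koszul factorization used to build $\fC_0(B)$, Lemma \ref{entries-null-homotopic} shows that multiplication by $a$ is zero on $\fH_0(B)$, so the ideal $I=(\{a\}\cup\{x_i-x_j\})$ acts by zero and the action factors through $R/I\cong\Q[x_{i_1},\dots,x_{i_l}]$ in one step. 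You instead keep $a$ in $\tilde{S}$ and deduce nilpotency of $a$ from the boundedness of the $a$-grading; citing that boundedness from Lemma \ref{lemma-def-Hilbert-H-0} is admissible within the paper's logical order, but note that the boundedness is itself most naturally proved by the very fact that $a$ acts as zero (a finitely generated graded module on which $a$ is not nilpotent has unbounded $a$-grading), so your dependency runs backwards relative to the cleaner fact, and Lemma \ref{entries-null-homotopic} would hand you $N=1$ directly. Second, your middle step --- decomposing into the summands $M_{i,j}$ and applying the Syzygy Theorem to each one with the $x$-grading, shifted by $1$ for odd $j$ --- is correct but unnecessary: the paper applies Theorem \ref{thm-syzygy} exactly once, to all of $\fH_0(B)$ equipped with the polynomial grading $\deg_a+\deg_x$, which is already even (Remark \ref{remark-poly-grading-even}) and whose degree-$2T$ piece has dimension $P_B(T)$ for $T\gg 1$; this bypasses the fixing of $(i,j)$, the nilpotency bookkeeping, and the grading shifts entirely. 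So your argument stands as written, but the two places you flag as delicate are exactly the places the paper short-circuits.
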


\begin{proof}
Fix a marking of $B$. Assume $x_1,\dots,x_n$ are all the variables assigned to marked points in $B$. Let $R:=\Q[a,x_1,\dots,x_n]$. Consider the ideal $I=(\{a\}\cup\{x_i-x_j~|~x_i,~x_j$ are assigned to the same component of $B.\})$ By Lemmas \ref{entries-null-homotopic} and \ref{lemma-ras-components}, multiplication by any element of $I$ is the zero endomorphism of $\fH_0(B)$. Fix a single marking $x_{i_k}$ on the $k$-th component of $B$. Then the $R$-action on $\fH_0(B)$ factors through the quotient ring $R/I \cong \Q[x_{i_1},\dots,x_{i_l}]$. Moreover, since $\fC_0(B)$ is finitely generated over $R$ and $R$ is Noetherian, $\fH_0(B)$ is a finitely generated $R$-module. Thus, $\fH_0(B)$ is a finitely generated $\Q[x_{i_1},\dots,x_{i_l}]$-module. Clearly, with the polynomial grading, $\fH_0(B)$ is a graded $\Q[x_{i_1},\dots,x_{i_l}]$-module, whose grading is even. By Hilbert's Syzygy Theorem (Theorem \ref{thm-syzygy}),  the degree of $P_B(T)$ is at most $l-1$.
\end{proof}

\section{The Computation Tree Argument}\label{sec-computation-tree}

In this section, we use Franks and Williams' computation tree argument to show that the degree of the Hilbert polynomial of $\fH_0(B)$ is at least $l-1$, where $l$ is the number of components of $B$. This will complete the proof of Theorem \ref{thm-Hilbert-degree}. 

\subsection{Transverse computation trees}\label{subsec-trans}

First, we recall the concept of transverse computation tree, which is defined in \cite{FW} and called ``invariant computation tree" there.

Recall that two closed braids represent the same smooth link if and only if one of them can be changed into the other by a finite sequence of Markov moves, which are:
\begin{itemize}
    \item Braid group relations generated by
    \begin{itemize}
	  \item $\sigma_i\sigma_i^{-1}=\sigma_i^{-1}\sigma_i=\emptyset$,
	  \item $\sigma_i\sigma_j=\sigma_j\sigma_i$, when $|i-j|>1$,
	  \item $\sigma_i\sigma_{i+1}\sigma_i=\sigma_{i+1}\sigma_i\sigma_{i+1}$.
    \end{itemize}
    \item Conjugations: $\mu\leftrightsquigarrow\eta^{-1}\mu\eta$,
    where $\mu,~\eta\in \mathbf{B}_m$.
    \item Stabilizations and destabilizations:
    \begin{itemize}
	  \item positive: $\mu~(\in \mathbf{B}_m)\leftrightsquigarrow \mu\sigma_m~(\in \mathbf{B}_{m+1})$,
	  \item negative: $\mu~(\in \mathbf{B}_m)\leftrightsquigarrow \mu\sigma_m^{-1}~(\in \mathbf{B}_{m+1})$.
    \end{itemize}
\end{itemize}
In the above, $\mathbf{B}_m$ is the braid group on $m$ strands.

The standard contact structure $\xi_{st}$ on $\mathbb{R}^3$ is the kernel of the contact form $\alpha_{st} = dz-ydx+xdy=dz+r^2d\theta$. An oriented smooth link $L$ in $\mathbb{R}^3$ is transverse if $\alpha_{st}|_L>0$. Two transverse links are said to be transversely isotopic if there is an isotopy from one to the other through transverse links. It is known that: 
\begin{itemize}
	\item Every transverse link is transverse isotopic to a counterclockwise transverse closed braid around the $z$-axis. (See \cite{Ben}.)
	\item Any smooth counterclockwise closed braid around the $z$-axis can be smoothly isotoped into a counterclockwise transverse closed braid around the $z$-axis without changing the braid word. (A simple observation.)
\end{itemize}
The following theorem by Orevkov, Shevchishin \cite{OSh} and Wrinkle \cite{Wr} describes when two transverse closed braids are transversely isotopic. 

\begin{theorem}\cite{OSh,Wr}\label{transversal-markov}
Two counterclockwise transverse closed braids around the $z$-axis are transversely isotopic if and only if the braid word of one of them can be changed into that of the other by a finite sequence of braid group relations, conjugations and positive stabilizations and destabilizations.
\end{theorem}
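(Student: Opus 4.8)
The plan is to prove the two implications separately. The ``if'' direction is geometric: I would check that each allowed move is realized by a genuine transverse isotopy. Braid group relations produce the same element of $\mathbf{B}_m$, so the two closures are isotopic braids and the braid isotopy can be taken to stay transverse; conjugation $\mu\leftrightsquigarrow\eta^{-1}\mu\eta$ is realized by rotating a block of crossings around the braid axis, which keeps every strand transverse to the contact planes. The essential case is positive stabilization: I would exhibit an explicit transverse finger move that inserts one extra strand together with a single positive crossing $\sigma_m$ near the axis without ever forcing a strand to become tangent to $\xi_{st}$. This shows the three moves preserve the transverse isotopy class.

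To see that negative stabilization must be \emph{excluded} from the list, I would introduce the self-linking (Bennequin) number $\mathrm{sl}(B)=w(B)-n(B)$, where $w(B)$ is the writhe (the exponent sum of the braid word) and $n(B)$ is the number of strands. This is a transverse isotopy invariant. Under positive stabilization both $w$ and $n$ increase by $1$, so $\mathrm{sl}$ is unchanged, consistent with the previous paragraph; under negative stabilization $w$ drops by $1$ while $n$ increases by $1$, so $\mathrm{sl}$ decreases by $2$. Hence negative stabilization genuinely changes the transverse type and cannot be an allowed move.

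The substance of the theorem is the ``only if'' direction. Starting from a transverse isotopy between transverse closed braids $B_1$ and $B_2$, Bennequin's theorem guarantees both are transversely braided, and the classical (smooth) Markov theorem already relates them by braid relations, conjugations, and stabilizations of \emph{both} signs; the task is to remove every negative stabilization. I would make the isotopy generic and analyze the one-parameter family of braids it sweeps out, tracking the combinatorial changes of the braid word. The key claim is that the births and deaths of crossings forced along a generic transverse family are always of positive type, so the whole transverse isotopy decomposes into the allowed moves only.

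The hard part is precisely this last claim, and it cannot be established combinatorially: the smooth Markov theorem by itself will produce negative stabilizations that one must argue are eliminable, and ruling them out requires genuine geometric control of the family. Here Orevkov--Shevchishin use the theory of pseudoholomorphic curves, viewing the transverse braid as the boundary of a holomorphic curve in a Stein filling and controlling how it can degenerate, while Wrinkle gives a more topological argument through a careful study of the characteristic foliation and generic movies of the corresponding fronts. Either route supplies the missing analytic input that the purely combinatorial approach lacks.
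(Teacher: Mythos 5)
The first thing to note is that the paper does not prove Theorem \ref{transversal-markov} at all: it is imported verbatim from \cite{OSh} and \cite{Wr} as an external input, so there is no internal argument to compare yours against, and your attempt has to be judged on its own terms. Your ``if'' direction is correct in outline: braid relations, conjugation, and positive stabilization can each be realized by explicit transverse isotopies, and your self-linking computation $\mathrm{sl}(B)=w(B)-n(B)$ (unchanged under positive stabilization, dropped by $2$ under negative stabilization) correctly explains why negative stabilization must be excluded --- though that exclusion is a sanity check on the statement rather than a logical part of it.

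The genuine gap is the ``only if'' direction, which is the entire content of the theorem, and your proposal does not prove it: you concede that the key claim requires the work of Orevkov--Shevchishin or Wrinkle, i.e.\ you outsource the one nontrivial step to the very papers being cited. That makes your text a (reasonable) annotated citation, matching the paper's own treatment, but not a proof. Worse, the intermediate sketch you offer --- make the transverse isotopy generic and track births and deaths of crossings of the braid word, arguing they are ``always of positive type'' --- is not a viable route as stated: a generic transverse isotopy between two transverse closed braids does not stay within the class of closed braids, since intermediate transverse links need not wind monotonically around the $z$-axis, so there is no braid word whose combinatorial changes you can follow. The actual difficulty both cited proofs confront is a \emph{parametric} transverse braiding theorem: one must re-braid the entire one-parameter family and control how the braiding changes, which \cite{OSh} does by analytic (pseudoholomorphic-curve) methods and \cite{Wr} by Birman--Menasco-style braid foliation techniques. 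Your description of Wrinkle's argument via ``fronts'' is also off: transverse links have no front projection (that is a Legendrian notion), so even the attribution of methods needs correcting.
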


We call braid group relations, conjugations and positive stabilizations/destabilizations the transverse Markov moves. 

\begin{figure}[ht]
$
\xymatrix{ 
\setlength{\unitlength}{1pt}
\begin{picture}(80,30)(-40,-10)

\put(0,0){\vector(1,1){10}}
\put(10,10){\line(1,1){10}}

\put(0,0){\vector(-1,1){10}}
\put(-10,10){\line(-1,1){10}}

\put(0,0){\circle*{2}}

\put(20,20){\circle*{2}}

\put(-20,20){\circle*{2}}

\put(-5,-10){\small{$B_+$}}

\put(-40,15){\small{$B_0$}}

\put(32,15){\small{$B_-$}}

\end{picture} &&& \setlength{\unitlength}{1pt}
\begin{picture}(80,30)(-40,-10)

\put(0,0){\vector(1,1){10}}
\put(10,10){\line(1,1){10}}

\put(0,0){\vector(-1,1){10}}
\put(-10,10){\line(-1,1){10}}

\put(0,0){\circle*{2}}

\put(20,20){\circle*{2}}

\put(-20,20){\circle*{2}}

\put(-5,-10){\small{$B_-$}}

\put(-40,15){\small{$B_0$}}

\put(32,15){\small{$B_+$}}

\end{picture}
}
$
\caption{}\label{fig-Conway}

\end{figure}

Following \cite{FW}, a Conway splitting of a closed braid at a crossing consists of two choices of local changes to the braid, encoded diagrammatically as in Figure \ref{fig-Conway}, where
\begin{itemize}
	\item $B_+$, $B_-$ and $B_0$ are closed braids identical outside the part shown in Figure \ref{fig-skein},
	\item the crossing of $B_+$/$B_-$ shown in Figure \ref{fig-skein} is the crossing where the Conway splitting is performed at.
\end{itemize}

\begin{definition}\cite{FW}\label{def-computation-tree}
A transverse computation tree $\Upsilon$ is a connected, rooted, oriented binary tree, with each node labeled by a closed braid satisfying:
\begin{enumerate}
	\item If a non-terminal node $N$ of $\Upsilon$ is labeled by a closed braid $B$, then its two child nodes $N_0$ and $N_1$ are labeled by closed braids $B_0$ and $B_1$ obtained from $B$ by 
\begin{itemize}
	\item first performing a finite sequence of transverse Markov moves on $B$ to get a closed braid $\hat{B}$,
	\item then performing a Conway splitting at a crossing of $\hat{B}$ to get $B_0$ and $B_1$.
\end{itemize}
   \item If $N$ is a terminal node of $\Upsilon$, then $N$ is labeled by a closed braid with no crossings.
\end{enumerate}

\end{definition}

\begin{theorem}\cite[Theorem 1.7]{FW}\label{thm-computation-tree-exists}
For any closed braid $B$, there is a transverse computation tree whose root is labeled by $B$.
\end{theorem}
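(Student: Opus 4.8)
The plan is to build the tree recursively by a well-founded induction that peels off one crossing at a time, using Conway splittings to pass to simpler closed braids and reserving the transverse Markov moves for the situations where a naive crossing count cannot be lowered. The basic mechanism is the one encoded in Figure \ref{fig-Conway}: a Conway splitting performed at a crossing of a closed braid $\hat{B}$ produces two children, one of which is the oriented resolution $B_0$ (which has one fewer crossing than $\hat{B}$), while the other is obtained by switching the crossing (which leaves the crossing number unchanged). Terminal nodes are to be the crossingless closed braids, which are available as leaves the moment a branch reaches crossing number zero. Thus the whole problem reduces to exhibiting, for each closed braid $B$ with at least one crossing, a finite sequence of transverse Markov moves producing some $\hat{B}$ together with a distinguished crossing of $\hat{B}$ at which the Conway splitting yields two children that are strictly smaller in a complexity order that I now fix.

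First I would take the crossing number $c$ as the primary measure. The oriented-resolution child always drops $c$ by one, so it is automatically smaller and handled by the inductive hypothesis; the only delicate child is the switched one, which has the same crossing number. To control it I introduce a secondary measure, the number $n_{-}$ of negative crossings, and order braids lexicographically by $(c,n_{-})$. If $B$ has at least one negative crossing, I split there: by Figure \ref{fig-Conway} the children are the resolution (smaller $c$) and the braid with that crossing switched to positive (same $c$, but $n_{-}$ lowered by one), so both are strictly smaller in $(c,n_{-})$ and the induction proceeds. This step drives every closed braid down to the case of a \emph{positive} braid, where $n_{-}=0$.

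The substantial case is therefore that of a positive braid with $c\geq 1$, where switching a crossing would raise $n_{-}$ and is counterproductive; here the transverse Markov moves must do the work of lowering the crossing number. I would argue by a further induction on the braid index $m$ and on the multiplicity of the top generator $\sigma_{m-1}$. Using conjugation and the braid relations of Theorem \ref{transversal-markov}, one rewrites the word so that $\sigma_{m-1}$ is exposed: if it occurs exactly once, a positive destabilization removes it, lowering both $c$ and $m$; if it occurs several times, a Conway splitting at one such crossing gives a resolution child with smaller $c$, while braid relations and conjugation are used to cut down the multiplicity of $\sigma_{m-1}$ before recursing; and if $\sigma_{m-1}$ is absent, the top strand splits off as a trivial component and one recurses at braid index $m-1$. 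In each outcome the children either fall in crossing number or are reached only after transverse moves that strictly simplify the braid, which keeps the overall induction well-founded.

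The hard part will be precisely this positive-braid reduction, for two intertwined reasons. Because only \emph{positive} stabilizations and destabilizations are permitted among the transverse Markov moves, one is not free to undo arbitrary crossings, and the reduction must respect the transverse isotopy class (equivalently, the self-linking number) until a Conway splitting is invoked to leave that class; concretely, a minimal configuration such as a single positive crossing $\sigma_1$ (whose closure is a transverse unknot of self-linking $-1$) positively destabilizes to the crossingless one-strand braid, whereas $\sigma_1^{-1}$ does not, and the argument must exploit exactly this asymmetry to avoid the otherwise-fatal loop between the two switched children. Verifying that the multiplicity reduction of the top generator terminates, and that the base configurations bottom out at crossingless braids using positive destabilizations alone, is where the Franks--Williams complexity and the transverse hypothesis are indispensable; once this is in place, assembling the recursively produced subtrees under the root $B$ gives the required transverse computation tree in the sense of Definition \ref{def-computation-tree}, and finiteness of the tree follows from the well-foundedness of the complexity order.
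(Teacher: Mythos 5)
First, a remark on what you are being compared against: the paper gives no proof of this statement at all --- it is imported wholesale as \cite[Theorem 1.7]{FW} --- so your attempt must stand on its own as a substitute for Franks and Williams' argument. The first half of your proposal is sound: ordering closed braid diagrams lexicographically by $(c,n_-)$, where $c$ is the crossing number and $n_-$ the number of negative crossings, a Conway splitting at a negative crossing produces the oriented resolution, of complexity $(c-1,n_--1)$, and the positively switched braid, of complexity $(c,n_--1)$; both are strictly smaller, so every branch is driven down to positive braid words or to crossingless (terminal) labels.

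The genuine gap is the positive-braid case, which you explicitly defer (``the hard part will be precisely this positive-braid reduction''), and it is not a detail --- it is the entire content of the theorem. For a positive braid, \emph{every} available splitting produces a switched child with the same crossing number and one negative crossing: that child is strictly \emph{larger} in your order $(c,n_-)$, and it has the same braid index and the same number of occurrences of $\sigma_{m-1}^{\pm 1}$, so it is not smaller in your secondary order either. Worse, your own prescription then splits that child at its unique negative crossing, and one of \emph{its} children is the original braid, which is exactly the loop $B\leadsto B_-\leadsto B$ you call ``otherwise-fatal''; nothing in the proposal actually breaks it. The one concrete mechanism you offer --- that ``braid relations and conjugation are used to cut down the multiplicity of $\sigma_{m-1}$'' --- is asserted, not proved, and is false in general. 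Take $\beta=\sigma_2\sigma_1^2\sigma_2\sigma_1^2\in\mathbf{B}_3$, whose closure is a non-split $3$-component link (all pairwise linking numbers equal $1$). A $3$-strand word containing no $\sigma_2^{\pm1}$ closes up to a split link, and conjugation and braid relations preserve both the strand number and the closure; hence every word reachable from $\beta$ by these moves contains at least two occurrences of $\sigma_2^{\pm1}$. So positive destabilization (which needs multiplicity one) and multiplicity reduction are both unavailable, every splitting yields a switched child that increases your complexity, and the same linking-number obstruction persists for that child, so the recursion cycles. Supplying a well-founded complexity that strictly decreases on \emph{both} children of a suitably chosen splitting, performed after suitably chosen transverse Markov moves, is precisely what Franks and Williams' proof provides and what is missing here. (A smaller point: under the literal reading of Definition \ref{def-computation-tree}, a node labeled by the closure of $\sigma_1$ can never be terminal, since it has a crossing; your step ``positively destabilize and stop'' tacitly uses the relaxed convention that transverse Markov moves may also be applied to reach terminal labels, and this should be made explicit.)
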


\subsection{Decategorification of $\fH_0(B)$}

Now, let us quickly explain the normalization of the decategorification of $\fH_0(B)$. Recall that the decategorification $F_B(\alpha,\xi)$ of $\fH_0(B)$ is defined in \eqref{eq-def-HOMFLYPT}. By Theorem \ref{cor-homology-inv}, we know that 
\begin{itemize}
	\item[(a)] $F_B(\alpha,\xi)$ is invariant under transverse Markov moves.
	\item[(b)] If $B'$ is obtained from $B$ by a negative stabilization, then $F_{B'}(\alpha,\xi)= -\alpha^{-1}\xi^{-1}F_B(\alpha,\xi)$.
\end{itemize}
From the definition of $\fH_0(B)$, especially the local chain complexes \eqref{eq-def-chain-crossing+} and \eqref{eq-def-chain-crossing-}, we know that
\begin{itemize}
	\item[(c)] $\alpha^{-1} F_{B_+}(\alpha,\xi) - \alpha F_{B_-}(\alpha,\xi) = (\xi^{-1}-\xi)F_{B_0}(\alpha,\xi)$, where $B_+$, $B_-$ and $B_0$ are closed braids identical outside the part shown in Figure \ref{fig-skein}.
\end{itemize}
Finally, for the unknot $U$ without any crossings, $\fH_0(U)$ is the homology of the Koszul matrix factorization $(a,0)_{\Q[x]}$. It is straightforward to check that $\fH_0(U) \cong \Q[x]\{-1,1\}$. Recall that $\deg x =(0,2)$. This implies that 
\begin{itemize}
	\item[(d)] $F_U(\alpha, \xi) = \alpha^{-1}\xi\sum_{n=0}^{\infty}\xi^{2n} = \frac{\alpha^{-1}}{\xi^{-1}-\xi}$.
\end{itemize}

Of course, by (a-d) above, we know that $F_B(\alpha,\xi)$ satisfies normalization \eqref{eq-HOMFLYPT-normalization}.

\begin{lemma}\label{lemma-unlink}
Denote by $U^{\sqcup l}$ the $l$-strand closed braid with no crossings. Then 
\[
F_{U^{\sqcup l}}(\alpha, \xi) =\alpha^{-1}\xi(1+\alpha^{-1}\xi)^{l-1}\sum_{T=0}^\infty \bn{T}{l-1} \xi^{2T}.
\]

In particular, $F_{U^{\sqcup l}}(\alpha\xi, \xi) = \alpha^{-1}(1+\alpha^{-1})^{l-1}\sum_{T=0}^\infty \bn{T}{l-1} \xi^{2T}$ and $Q_{U^{\sqcup l}}(\alpha, T) = \alpha^{-1}(1+\alpha^{-1})^{l-1} \bn{T}{l-1}$.
\end{lemma}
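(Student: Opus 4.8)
The plan is to establish the three assertions in order, the first being the crux: once $F_{U^{\sqcup l}}(\alpha,\xi)$ is known in closed form, the formula for $F_{U^{\sqcup l}}(\alpha\xi,\xi)$ follows by the substitution $\alpha\mapsto\alpha\xi$, and the formula for $Q_{U^{\sqcup l}}(\alpha,T)$ follows from Proposition \ref{prop-hidden-polynomial}(1), which identifies $Q_{U^{\sqcup l}}(\alpha,T)$ with the coefficient of $\xi^{2T}$ in $F_{U^{\sqcup l}}(\alpha\xi,\xi)$ for $T\gg1$. So I would concentrate on computing $F_{U^{\sqcup l}}(\alpha,\xi)$, proceeding by induction on $l$ and using only properties (a)--(d) of the decategorification established just above: invariance under transverse Markov moves (in particular under positive stabilization), the negative-stabilization rule, the skein relation, and the value $F_U=\frac{\alpha^{-1}}{\xi^{-1}-\xi}$, which is exactly the base case $l=1$.

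For the inductive step I would relate $U^{\sqcup(l+1)}$ to $U^{\sqcup l}$ through a single crossing. Viewing $U^{\sqcup l}$ as the closure of the identity braid in $\mathbf{B}_l$ and performing a positive stabilization produces the closure of $\sigma_l\in\mathbf{B}_{l+1}$; this closure is again $U^{\sqcup l}$ (the transposition underlying $\sigma_l$ merges two strands into one circle and leaves the other $l-1$ untouched), so positive-stabilization invariance gives $F_{\widehat{\sigma_l}}=F_{U^{\sqcup l}}$. The closure of $\sigma_l^{-1}$ is the negative stabilization of the same $U^{\sqcup l}$, so $F_{\widehat{\sigma_l^{-1}}}=-\alpha^{-1}\xi^{-1}F_{U^{\sqcup l}}$. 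Taking $B_+=\widehat{\sigma_l}$ and $B_-=\widehat{\sigma_l^{-1}}$, the oriented resolution $B_0$ at this crossing is the closure of the identity braid in $\mathbf{B}_{l+1}$, namely $U^{\sqcup(l+1)}$. Substituting into the skein relation yields $(\alpha^{-1}+\xi^{-1})F_{U^{\sqcup l}}=(\xi^{-1}-\xi)F_{U^{\sqcup(l+1)}}$, i.e. $F_{U^{\sqcup(l+1)}}=\frac{\alpha^{-1}+\xi^{-1}}{\xi^{-1}-\xi}F_{U^{\sqcup l}}$. Iterating from the base case gives $F_{U^{\sqcup l}}=\frac{\alpha^{-1}(\alpha^{-1}+\xi^{-1})^{l-1}}{(\xi^{-1}-\xi)^l}$, which after clearing powers of $\xi$ simplifies to $\frac{\alpha^{-1}\xi(1+\alpha^{-1}\xi)^{l-1}}{(1-\xi^2)^l}$.

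It remains to convert this rational function into the stated power series. Expanding the factor $(1-\xi^2)^{-l}$ in powers of $\xi^2$ and reading off the coefficient of $\xi^{2T}$ — a binomial coefficient that is a polynomial of degree $l-1$ in $T$ — produces the asserted expansion of $F_{U^{\sqcup l}}(\alpha,\xi)$; the substitution $\alpha\mapsto\alpha\xi$ turns the prefactor $\alpha^{-1}\xi(1+\alpha^{-1}\xi)^{l-1}$ into $\alpha^{-1}(1+\alpha^{-1})^{l-1}$ while leaving the series unchanged, giving $F_{U^{\sqcup l}}(\alpha\xi,\xi)$; and Proposition \ref{prop-hidden-polynomial}(1) then identifies the $\xi^{2T}$-coefficient polynomial with $Q_{U^{\sqcup l}}(\alpha,T)$. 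The step I expect to demand the most care is the geometric and stabilization bookkeeping in the inductive step: correctly identifying the closures of $\sigma_l^{\pm1}$ as the positive and negative stabilizations of $U^{\sqcup l}$ (hence counting components so that both closures really are $U^{\sqcup l}$), and applying positive-stabilization invariance versus the negative-stabilization grading shift on the correct side of the skein relation. As a conceptual check I would also compute $\fH_0(U^{\sqcup l})$ directly: with one marked point per component, $\fC_0(U^{\sqcup l})$ is the Koszul matrix factorization $\bigotimes_{k=1}^l (a,0)_R$ over $R=\Q[a,x_1,\dots,x_l]$ with $d_\chi=0$, and because all $l$ factors share the single variable $a$ its homology is $\Q[x_1,\dots,x_l]$ tensored with an exterior algebra on $l-1$ odd generators of bidegree $(-1,1)$; this reproduces the factors $(1-\xi^2)^{-l}$ and $(1+\alpha^{-1}\xi)^{l-1}$ and makes the binomial coefficient transparent.
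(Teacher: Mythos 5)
Your route to the closed form is genuinely different from the paper's. You obtain $F_{U^{\sqcup l}}$ by induction on $l$, using only properties (a)--(d): positive-stabilization invariance and the negative-stabilization rule identify the values of $F$ on the closures of $\sigma_l^{\pm 1}\in\mathbf{B}_{l+1}$, and the skein relation then yields the recursion $F_{U^{\sqcup(l+1)}}=\frac{\alpha^{-1}+\xi^{-1}}{\xi^{-1}-\xi}\,F_{U^{\sqcup l}}$; your bookkeeping here (which closure is which stabilization, and which side of the skein relation each sits on) is correct. The paper instead computes $\fH_0(U^{\sqcup l})$ directly: $\fC_0(U^{\sqcup l})$ is the Koszul matrix factorization with $l$ rows $(a,0)$ over $\Q[a,x_1,\dots,x_l]$, and Lemma \ref{lemma-contraction-weak} (with $I=0$, $a_0=a$, $a_1=0$) identifies its homology with the underlying module of an $(l-1)$-row factorization with rows $(0,0)$ over $\Q[x_1,\dots,x_l]$, shifted by $\left\langle 1\right\rangle\{-1,1\}$ --- which is exactly your closing ``conceptual check.'' Both derivations are sound, and both land on the same rational function $F_{U^{\sqcup l}}(\alpha,\xi)=\alpha^{-1}\xi(1+\alpha^{-1}\xi)^{l-1}(1-\xi^2)^{-l}$; your skein route needs no matrix-factorization input, at the cost of the braid-closure bookkeeping, while the paper's route also exhibits the module structure of $\fH_0(U^{\sqcup l})$ explicitly.

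There is, however, a genuine error in your last step --- one that the paper itself also commits, so it is really an erratum in the statement. The expansion of your (correct) rational function is $(1-\xi^2)^{-l}=\sum_{T\ge 0}\bn{T+l-1}{l-1}\xi^{2T}$, \emph{not} $\sum_{T\ge 0}\bn{T}{l-1}\xi^{2T}$; the latter series equals $\xi^{2(l-1)}(1-\xi^2)^{-l}$. So for $l\ge 2$ your closed form does not ``produce the asserted expansion'' --- it contradicts it, and the corrected statement should read $F_{U^{\sqcup l}}(\alpha,\xi)=\alpha^{-1}\xi(1+\alpha^{-1}\xi)^{l-1}\sum_{T\ge0}\bn{T+l-1}{l-1}\xi^{2T}$ and $Q_{U^{\sqcup l}}(\alpha,T)=\alpha^{-1}(1+\alpha^{-1})^{l-1}\bn{T+l-1}{l-1}$. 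A concrete check at $l=2$: the $\zed_2$-degree-one summand of $\fH_0(U^{\sqcup 2})$ is $\Q[x_1,x_2]\{-1,1\}$, so the coefficient of $\alpha^{-1}\xi$ in $F_{U^{\sqcup 2}}$ is $1$, whereas the stated series gives $0$. A careful write-up should flag this mismatch rather than assert agreement. The slip is harmless for the paper's main line of argument, since $\bn{T+l-1}{l-1}$ and $\bn{T}{l-1}$ have the same degree $l-1$ and the same leading coefficient $\frac{1}{(l-1)!}$ in $T$, which is all that the proof of Lemma \ref{lemma-degree-Hilbert-lower-bound} (hence Theorem \ref{thm-Hilbert-degree} and Corollary \ref{cor-degree-hidden-polynomial}) uses; but the explicit formulas in Example \ref{example-2-braids} inherit the shift $T\mapsto T+1$.
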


\begin{proof}
$\fH_0(U^{\sqcup l})$ is the homology of the Koszul matrix factorization 
\[
\fC_0(U^{\sqcup l}) = \left(%
\begin{array}{ll}
  a, & 0\\
  a, & 0\\
  \dots & \dots \\
  a, & 0
\end{array}%
\right)_{\Q[a,x_1,\dots,x_l]},
\]
where there are $l$-rows. In Lemma \ref{lemma-contraction-weak}, let $I=0$, $a_0=a$, $a_1=0$ and $\widetilde{M}=\fC_0(U^{\sqcup l})$. Then $\fH_0(U^{\sqcup l})$ is isomorphic to underlying $\zed_2\times\zed^3$-graded $\Q[a,x_1,\dots,x_l]$-module of the Koszul matrix factorization
\[
\left(%
\begin{array}{ll}
  0, & 0\\
  \dots & \dots \\
  0, & 0
\end{array}%
\right)_{\Q[x_1,\dots,x_l]} \left\langle 1\right\rangle \{-1, 1\},
\]
where there are $l-1$ rows, and $a$ acts on $\Q[x_1,\dots,x_l]$ as $0$.

From this, we have $F_{U^{\sqcup l}}(\alpha, \xi) = \alpha^{-1}\xi(1+\alpha^{-1}\xi)^{l-1}\sum_{T=0}^\infty \bn{T}{l-1} \xi^{2T}$ and $Q_{U^{\sqcup l}}(\alpha, T) = \alpha^{-1}(1+\alpha^{-1})^{l-1} \bn{T}{l-1}$.
\end{proof}

\subsection{Skein relation for $Q_B(\alpha,T)$}

In this subsection, we prove Proposition \ref{prop-hidden-polynomial}.

\begin{proof}[Proof of Proposition \ref{prop-hidden-polynomial}]
Recall that $F_B(\alpha,\xi)=\sum_{(i,j,k)\in \zed^3} (-1)^i \alpha^j\xi^k\dim_\Q \fH_0^{i,j,k}(B)$. So 
\begin{eqnarray*}
F_B(\alpha\xi,\xi) & = &\sum_{(i,j,k)\in \zed^3} (-1)^i \alpha^j\xi^{j+k}\dim_\Q \fH_0^{i,j,k}(B) \\
& = & \sum_{(i,j,T)\in \zed^3} (-1)^i \alpha^j\xi^{2T}\dim_\Q \fH_0^{i,j,2T-j}(B) \\
& = & \sum_{T\in \zed}(\sum_{(i,j)\in \zed^2} (-1)^i \alpha^j\dim_\Q \fH_0^{i,j,2T-j}(B))\xi^{2T}
\end{eqnarray*}
where, in the second step, we used the fact that the polynomial grading of $\fH_0(B)$ is even, that is, \linebreak $\fH_0^{i,j,2T+1-j}(B) \cong 0$ $\forall ~(i,j,T)\in \zed^3$. This shows that 
\[
c_{B,T}(\alpha) = \sum_{(i,j)\in \zed^2} (-1)^i \alpha^j\dim_\Q \fH_0^{i,j,2T-j}(B) ~\forall ~T\in \zed.
\]
But $\dim_\Q \fH_0^{i,j,2T-j}(B) = P_{B,i,j}(T)$ for all large positive integer $T$ by Lemma \ref{lemma-def-Hilbert-H-0}. And $\bigoplus_{T\in \zed}\fH_0^{i,j,2T-j}(B)\cong 0$ for all but finitely many $(i,j)\in \zed^2$. This implies that, for $T\gg1$, 
\[
c_{B,T}(\alpha)=\sum_{(i,j)\in \zed\times\zed} (-1)^i\alpha^j P_{B,i,j}(T)=Q_B(\alpha,T),
\]
which is part (1) of the proposition.

For part (2) of the proposition, note that, by normalization \eqref{eq-HOMFLYPT-normalization}, $F_B(\alpha,\xi)$ is invariant under transverse Markov moves. So, $c_{B,T}(\alpha)$ is invariant under these moves. By part (1), for $T\gg1$, $Q_B(\alpha,T)$ is invariant under these moves. But $Q_B(\alpha,T)\in\Q[\alpha,\alpha^{-1},T]$ is a polynomial. This implies that $Q_B(\alpha,T)$ is invariant under transverse Markov moves.

Using the normalization \eqref{eq-HOMFLYPT-normalization} of $F_B(\alpha,\xi)$, one can check that $c_{B,T}(\alpha)$ satisfies the skein relations in part (3) of the proposition. By part (1), for $T\gg1$, $Q_B(\alpha,T)$ satisfies the skein relations in part (3) of the proposition. But, again, $Q_B(\alpha,T)$ is a polynomial. So $Q_B(\alpha,T)$ satisfies the skein relations in part (3) of the proposition for all $T$.

Finally, we prove part (4) of the proposition. For a polynomial $g(\alpha,T)\in \Q[\alpha,\alpha^{-1},T]$, defined two operators $S_\alpha$ and $\Delta_\alpha$ by $S_\alpha(g)(\alpha,T)=\alpha^{-2}g(\alpha,T+1)$ and $\Delta_\alpha(g)(\alpha,T)=\alpha(g(\alpha,T)-g(\alpha,T-1))$. Note that:
\begin{itemize}
	\item $S_\alpha(g),~\Delta_\alpha(g) \in \Q[\alpha,\alpha^{-1},T]$.
	\item $S_\alpha$ is invertible and $S_\alpha^{-1} (g)(\alpha,T) = \alpha^{2}g(\alpha,T-1)$.
	\item $S_\alpha$ and $\Delta_\alpha$ commute with each other.
\end{itemize}
According to the skein relation of $Q_B(\alpha,T)$ from part (3), we know that
\begin{eqnarray}
\label{eq-Q-skein-alpha+} Q_{B_+}(\alpha,T) & = & S_\alpha^{-1} (Q_{B_-})(\alpha,T) + \Delta_\alpha(Q_{B_0})(\alpha,T), \\
\label{eq-Q-skein-alpha-} Q_{B_-}(\alpha,T) & = & S_\alpha (Q_{B_+})(\alpha,T) - S_\alpha(\Delta_\alpha(Q_{B_0}))(\alpha,T),
\end{eqnarray}
where $B_+$, $B_-$ and $B_0$ are closed braids identical outside the part shown in Figure \ref{fig-skein}.

\begin{figure}[ht]
$
\xymatrix{ 
\setlength{\unitlength}{1pt}
\begin{picture}(80,30)(-40,-10)

\put(0,0){\vector(1,1){10}}
\put(10,10){\line(1,1){10}}

\put(0,0){\vector(-1,1){10}}
\put(-10,10){\line(-1,1){10}}

\put(0,0){\circle*{2}}

\put(20,20){\circle*{2}}

\put(-20,20){\circle*{2}}

\put(-5,-10){\small{$B_+$}}

\put(-40,15){\small{$B_0$}}

\put(32,15){\small{$B_-$}}

\put(-21,6){\tiny{$\Delta_\alpha$}}

\put(13,6){\tiny{$S_\alpha^{-1}$}}

\end{picture} &&& \setlength{\unitlength}{1pt}
\begin{picture}(80,30)(-40,-10)

\put(0,0){\vector(1,1){10}}
\put(10,10){\line(1,1){10}}

\put(0,0){\vector(-1,1){10}}
\put(-10,10){\line(-1,1){10}}

\put(0,0){\circle*{2}}

\put(20,20){\circle*{2}}

\put(-20,20){\circle*{2}}

\put(-5,-10){\small{$B_-$}}

\put(-40,15){\small{$B_0$}}

\put(32,15){\small{$B_+$}}

\put(-40,6){\tiny{$-S_\alpha\circ\Delta_\alpha$}}

\put(13,6){\tiny{$S_\alpha$}}

\end{picture}
}
$
\caption{}\label{fig-Conway-labeled}

\end{figure}

Now assume that $\Upsilon$ is a transverse computation tree with its root labeled by the closed braid $B$. We label each edge of $\Upsilon$ according to the Conway splitting performed as in Figure \ref{fig-Conway-labeled}. Note that, in Figure \ref{fig-Conway-labeled}, the operations labeling the edges apply to the two children in a Conway splitting.

Recall that
\begin{itemize}
	\item each terminal node $N$ in $\Upsilon$ in labeled by a closed braid $B_N$ with no crossings,
	\item there is a unique path from the root to each terminal node $N$ in $\Upsilon$.
\end{itemize}
Denote by $\Psi_N$ the composition, from left to right, of all the operators labeling the edges in the path from the root to the terminal node $N$. Then
\begin{equation}\label{eq-tree-computation-Q}
Q_{B}(\alpha, T) = \sum_{N\text{ is a terminal node of }\Upsilon} \Psi_N(Q_{B_N})(\alpha, T).
\end{equation}
This shows that $Q_{B}(\alpha, T)$ can be computed from $\Upsilon$.
\end{proof}

Now, it is straightforward to adapt the proof of \cite[Proposition 1.11]{FW} to prove that the degree of the Hilbert polynomial of the HOMFLYPT homology $\fH_0(B)$ of a closed braid $B$ is at least $l-1$, where $l$ is the number of components of $B$.

\begin{lemma}\label{lemma-degree-Hilbert-lower-bound}
Let $B$ be a closed braid with $l$ components. Then $\deg_T Q_{B}(1, T) = l-1$.
\end{lemma}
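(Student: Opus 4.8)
The plan is to show that $\deg_T Q_B(1,T)=l-1$ by combining the upper bound already available from Corollary \ref{cor-Hilbert-at-most-l-1} with a matching lower bound extracted from the computation tree formula \eqref{eq-tree-computation-Q}. The upper bound is immediate: each $P_{B,i,j}(T)$ has degree at most $l-1$ by Hilbert's Syzygy Theorem (as packaged in Corollary \ref{cor-Hilbert-at-most-l-1}), so $Q_B(1,T)=\sum_{(i,j)}(-1)^iP_{B,i,j}(T)$ has $\deg_T$ at most $l-1$. The whole content of the lemma is therefore the \emph{lower bound}, and the natural tool is the transverse computation tree, following the Franks--Williams argument in \cite[Proposition 1.11]{FW}.

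First I would specialize everything to $\alpha=1$. At $\alpha=1$ the operators simplify: $S_\alpha$ becomes the shift $S(g)(T)=g(T+1)$ (with inverse $S^{-1}(g)(T)=g(T-1)$), and $\Delta_\alpha$ becomes the finite-difference operator $\Delta(g)(T)=g(T)-g(T-1)$, which lowers polynomial degree by exactly one. The key observations are that the shift $S$ preserves the degree in $T$ of any polynomial, while each $\Delta$ applied in a Conway splitting strictly drops the degree by one. Along the path in $\Upsilon$ from the root to a terminal node $N$, the composite operator $\Psi_N$ (evaluated at $\alpha=1$) is a product of shifts and $\Delta$'s, so it lowers the degree of its input by exactly the number of $\Delta$-labeled edges on that path. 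The terminal nodes are labeled by crossingless braids, i.e.\ unlinks $U^{\sqcup l_N}$, whose $Q$-values we know explicitly from Lemma \ref{lemma-unlink}: at $\alpha=1$, $Q_{U^{\sqcup l_N}}(1,T)=2^{\,l_N-1}\binom{T}{l_N-1}$, a polynomial of degree exactly $l_N-1$ with positive leading coefficient.

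The main obstacle, exactly as in \cite{FW}, is a bookkeeping argument controlling how the number of $\Delta$-edges along a path relates to the change in the number of link components. I would track the number of components along the tree: a Conway splitting at a crossing is a smoothing, and whether the two strands through the crossing belong to the same component or to different components of the ambient braid controls whether the splitting merges or splits components. The decisive point is that for each terminal node $N$, the quantity $(\text{number of }\Delta\text{-edges on the path to }N) + (l_N-1)$ is at least $l-1$, with equality achievable along at least one ``self-referential'' path where every $\Delta$-smoothing occurs at a self-crossing (so component count is preserved and $l_N=l$, giving zero $\Delta$-edges needed beyond what the tree forces). After applying $\Psi_N$ at $\alpha=1$, the contribution $\Psi_N(Q_{U^{\sqcup l_N}})(1,T)$ is a polynomial of degree $(l_N-1)-(\#\Delta\text{-edges})$, which is therefore at most $l-1$, consistent with the upper bound.

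To conclude the lower bound I must rule out cancellation of the top-degree terms in the sum \eqref{eq-tree-computation-Q}. Here I would isolate the unique terminal node (or the collection of nodes) realizing the maximal degree $l-1$ and argue that its leading coefficient cannot be cancelled. Since each $\Delta$ strictly drops degree, only terminal nodes with zero $\Delta$-edges on their path contribute to degree $l-1$; along such all-shift paths the operator $\Psi_N$ at $\alpha=1$ is a pure shift composite, which preserves both the degree and the sign of the leading coefficient, and the sign $(-1)^{(\text{homological shift})}$ accrued must be controlled so that the leading coefficients do not cancel. Because shifts do not introduce sign changes and the binomial leading coefficient $2^{\,l-1}/(l-1)!$ from Lemma \ref{lemma-unlink} is strictly positive, the surviving top-degree contributions add up with a definite sign and cannot vanish. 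Hence $\deg_T Q_B(1,T)=l-1$, completing the proof. The delicate step throughout is the component-counting inequality guaranteeing that some path reaches degree exactly $l-1$; this is precisely the combinatorial heart borrowed from \cite[Proposition 1.11]{FW}, and I would adapt their induction on the number of crossings to our operator normalization.
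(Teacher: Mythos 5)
Your setup is the same as the paper's: specialize the operators at $\alpha=1$, use the computation tree formula \eqref{eq-tree-computation-Q} together with the unlink values $Q_{U^{\sqcup l_N}}(1,T)=2^{l_N-1}\binom{T}{l_N-1}$ from Lemma \ref{lemma-unlink}, and note that $S$ preserves degree while $\Delta$ drops it by one. However, the heart of the argument --- ruling out cancellation of the top-degree coefficients --- contains a genuine gap. Your key claim, ``only terminal nodes with zero $\Delta$-edges on their path contribute to degree $l-1$,'' is false. The contribution of a terminal node $N$ has degree $(l_N-1)-q_N$ where $q_N$ is the number of $\Delta$-edges, and $l_N$ need not equal $l$: a smoothing never preserves the component count (it changes it by exactly $\pm 1$; in particular your assertion that smoothing a self-crossing preserves the count is backwards --- it \emph{increases} it by one, while it is the crossing \emph{switches} that preserve it). Consequently any terminal node with $l_N=l+q_N$ and $q_N>0$ contributes at degree exactly $l-1$, and such nodes come with signs $\pm$ from the $-S\circ\Delta$ labels. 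Your conclusion that ``the surviving top-degree contributions add up with a definite sign'' therefore does not hold, and the cancellation problem remains open in your argument. (You also state the component-counting inequality in the wrong direction: what the degree bound needs is $l_N-q_N\leq l$, not $l_N+q_N\geq l$, though both follow from $|l_N-l|\leq q_N$.)

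The paper closes this gap with a $2$-adic coefficient argument rather than a sign argument. There is a \emph{unique} terminal node $N_0$ with $q_{N_0}=0$ (the path that always switches crossings), and it contributes $\frac{2^{l-1}}{(l-1)!}$ to the coefficient of $T^{l-1}$, with no sign since the minus sign only appears on $\Delta$-labeled edges. Every other node $N$ contributing at degree $l-1$ has $l_N\geq l+1$, and applying $\Delta^{q_N}$ to $2^{l_N-1}\binom{T}{l_N-1}$ yields leading coefficient $\pm\frac{2^{l_N-1}}{(l-1)!}$, a multiple of $\frac{2^{l}}{(l-1)!}$. Hence the total coefficient of $T^{l-1}$ in $Q_B(1,T)$ is $\frac{k}{(l-1)!}$ with $k\equiv 2^{l-1}\pmod{2^l}$, so $k\neq 0$ regardless of signs. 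To repair your proof you would need to replace the ``definite sign'' step with this congruence (or some equivalent mechanism); without it, the possibility that the signed contributions at degree $l-1$ cancel is not excluded.
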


\begin{proof}(Adapted from the proof of \cite[Proposition 1.11]{FW}.) 
Defined two operators $S=S_\alpha|_{\alpha=1}$ and $\Delta=\Delta_\alpha|_{\alpha=1}$. That is, for a polynomial $g(T)\in \Q[T]$, $S(g)(T)=g(T+1)$ and $\Delta(g)(T)=g(T)-g(T-1)$. Note that:
\begin{itemize}
	\item $S(g),~\Delta(g) \in \Q[T]$, and:
	\begin{itemize}
		\item $\deg_T S(g) = \deg_T g$, $\deg_T \Delta(g) = \deg_T g -1$;
		\item If $\deg_T g =n$ and the coefficient of $T^{n}$ in $g$ is $c$, then the coefficient of $T^{n}$ in $S(g)$ is $c$ and the coefficient of $T^{n -1 }$ in $\Delta(g)$ is $nc$.
	\end{itemize}
		\item $S$ is invertible and $S^{-1} (g)(T) = g(T-1)$.
	\item $S$ and $\Delta$ commute with each other.
\end{itemize}
From equations \eqref{eq-Q-skein-alpha+} and \eqref{eq-Q-skein-alpha-}, we have
\begin{eqnarray}
\label{eq-Q-skein-alpha+spec} Q_{B_+}(1,T) & = & S^{-1} (Q_{B_-})(1,T) + \Delta(Q_{B_0})(1,T), \\
\label{eq-Q-skein-alpha-spec} Q_{B_-}(1,T) & = & S (Q_{B_+})(1,T) - S(\Delta_\alpha(Q_{B_0}))(1,T),
\end{eqnarray}
where $B_+$, $B_-$ and $B_0$ are closed braids identical outside the part shown in Figure \ref{fig-skein}.

\begin{figure}[ht]
$
\xymatrix{ 
\setlength{\unitlength}{1pt}
\begin{picture}(80,30)(-40,-10)

\put(0,0){\vector(1,1){10}}
\put(10,10){\line(1,1){10}}

\put(0,0){\vector(-1,1){10}}
\put(-10,10){\line(-1,1){10}}

\put(0,0){\circle*{2}}

\put(20,20){\circle*{2}}

\put(-20,20){\circle*{2}}

\put(-5,-10){\small{$B_+$}}

\put(-40,15){\small{$B_0$}}

\put(32,15){\small{$B_-$}}

\put(-21,6){\tiny{$\Delta$}}

\put(13,6){\tiny{$S^{-1}$}}

\end{picture} &&& \setlength{\unitlength}{1pt}
\begin{picture}(80,30)(-40,-10)

\put(0,0){\vector(1,1){10}}
\put(10,10){\line(1,1){10}}

\put(0,0){\vector(-1,1){10}}
\put(-10,10){\line(-1,1){10}}

\put(0,0){\circle*{2}}

\put(20,20){\circle*{2}}

\put(-20,20){\circle*{2}}

\put(-5,-10){\small{$B_-$}}

\put(-40,15){\small{$B_0$}}

\put(32,15){\small{$B_+$}}

\put(-40,6){\tiny{$-S\circ\Delta$}}

\put(13,6){\tiny{$S$}}

\end{picture}
}
$
\caption{}\label{fig-Conway-labeled-spec}

\end{figure}

Let $\Upsilon$ be any transverse computation tree with root labeled by $B$. We label each edge of $\Upsilon$ according to the Conway splitting performed as in Figure \ref{fig-Conway-labeled-spec}. Again, operations labeling edges apply to children in Conway splittings. By equation \eqref{eq-tree-computation-Q}, we get
\begin{equation}\label{eq-tree-computation-Q-spec}
Q_{B}(1, T) = \sum_{N\text{ is a terminal node of }\Upsilon} \Phi_N(Q_{B_N})(1, T),
\end{equation}
where 
\begin{itemize}
	\item $B_N$ is the closed braid with no crossings labeling $N$,
	\item $\Phi_N$ is the composition, from left to right, of all the operators labeling the edges in the path from the root to the terminal node $N$. 
\end{itemize}

Since $S$ and $\Delta$ commute with each other, for any terminal node $N$ of $\Upsilon$, $\Phi_N$ is of the form $\Phi_N = \pm S^{p_N}\circ \Delta^{q_N}$, where $p_N,q_N \in \zed$ and $q_N\geq0$. There is a unique terminal node $N_0$ of $\Upsilon$ such that $q_{N_0}=0$, which is obtained by always choosing the branch $B_{\pm} \leadsto B_{\mp}$ in each Conway splitting. Note that:
\begin{itemize}
	\item The change $B_{\pm} \leadsto B_{\mp}$ in a Conway splitting does not change the number of components in the closed braid.
	\item The change $B_{0} \leadsto B_{\pm}$ in a Conway splitting either increase the number of components by $1$ or decrease the number of components by $1$.
\end{itemize}
Denote by $l_N$ the number of components of the closed braid $B_N$ with no crossings. Then $l_{N_0} = l$ and $l_N-q_N \leq l$ for any terminal node $N$ of $\Upsilon$. 

By Lemma \ref{lemma-unlink}, $Q_{B_N}(1,T) = 2^{l_N-1}\bn{T}{l_N-1}$ for any terminal node $N$ of $\Upsilon$. From the labeling of edges of $\Upsilon$ in Figure \ref{fig-Conway-labeled-spec}, we know that $\Phi_{N_0}=S^{p_{N_0}}$ since the negative sign only appears in the change $B_- \leadsto B_0$. Thus, we know that: 
\begin{itemize}
	\item[(a)] $\deg_T \Phi_{N_0}(Q_{B_{N_0}}) = l-1$ and the coefficient of $T^{l-1}$ in $\Phi_{N_0}(Q_{B_{N_0}})$ is $\frac{2^{l-1}}{(l-1)!}$. 
\end{itemize}
One can also see that:
\begin{itemize}
	\item[(b)] For any terminal node $N$ of $\Upsilon$, $\deg_T \Phi_{N}(Q_{B_{N}}) = l_N-1-q_N \leq l-1$. 
\end{itemize}
Now assume $N$ is a terminal node $N$ of $\Upsilon$ such that $N \neq N_0$ and $\deg_T \Phi_{N}(Q_{B_{N}}) = l-1$. Then $l_N-q_N = l$, $q_N>0$ and $l_N = l+q_N \geq l+1$. The coefficient of $T^{l-1}$ in $\Phi_{N}(Q_{B_{N}})$ is 
\[
\pm\frac{2^{l_N-1}}{(l_N-1)!}\cdot (l_N-1)\cdot (l_N-2)\cdots (l_N-q_N)=\pm\frac{2^{l_N-1}}{(l_N-q_N-1)!} = \pm\frac{2^{l_N-1}}{(l-1)!}.
\]
This shows that:
\begin{itemize}
	\item[(c)]  If $N$ is a terminal node $N$ of $\Upsilon$ such that $N \neq N_0$ and $\deg_T \Phi_{N}(Q_{B_{N}}) = l-1$, then the coefficient of $T^{l-1}$ in $\Phi_{N}(Q_{B_{N}})$ is $\pm\frac{2^{l_N-1}}{(l-1)!}$, where $l_N-1 \geq l$. 
\end{itemize}
Combining equation \eqref{eq-tree-computation-Q-spec} and conclusions (a)-(c) above, we know that $\deg_T Q_{B}(1, T) \leq l-1$ and that the coefficient of $T^{l-1}$ in $Q_{B}(1, T)$ is of the form $\frac{k}{(l-1)!}$, where $k$ is an integer satisfying $k \equiv 2^{l-1} \mod 2^l$. In particular, $k \neq 0$. Thus, $\deg_T Q_{B}(1, T) = l-1$.
\end{proof}

We are now ready to prove Theorem \ref{thm-Hilbert-degree}, Corollaries \ref{cor-Hilbert-knot}, \ref{cor-degree-hidden-polynomial} and example \ref{example-2-braids}.

\begin{proof}[Proof of Theorem \ref{thm-Hilbert-degree}]
By Corollary \ref{cor-Hilbert-at-most-l-1}, $\deg_T P_B(T)\leq l-1$. Recall that $P_B(T) =\sum_{(i,j) \in \zed\times\zed}P_{B,i,j}(T)$ and $Q_B(T) =\sum_{(i,j) \in \zed\times\zed}(-1)^{i}P_{B,i,j}(T)$. The leading coefficient of $P_{B,i,j}(T)$ is non-negative since $P_{B,i,j}(T)$ is the Hilbert polynomial of $\bigoplus_{T\in\zed} \fH_0^{i,j,2T-j}(B)$. This shows that $\deg_T P_B(T) = \max \{\deg_T P_{B,i,j}(T)~|~(i,j) \in \zed\times\zed\}$. But, by Lemma \ref{lemma-degree-Hilbert-lower-bound}, $\deg_T Q_{B}(1, T) = l-1$. This implies that $\deg_T P_{B,i,j}(T) \geq l-1$ for some $(i,j) \in \zed\times\zed$. So $\deg_T P_B(T)\geq l-1$.
\end{proof}

\begin{proof}[Proof of Corollary \ref{cor-Hilbert-knot}]
Corollary \ref{cor-Hilbert-knot} follows from Theorem \ref{thm-Hilbert-degree} except for the statement that $\sum_{(i,j) \in \zed\times\zed} D_{i,j}$ is an odd number. But $Q_K(1,T)=\sum_{(i,j) \in \zed\times\zed} (-1)^{i} D_{i,j}$. According to the last paragraph of the proof of Lemma \ref{lemma-degree-Hilbert-lower-bound}, $\sum_{(i,j) \in \zed\times\zed} (-1)^{i} D_{i,j} \equiv 2^0 \mod 2^1$. This implies that $\sum_{(i,j) \in \zed\times\zed} D_{i,j}$ is an odd number.
\end{proof}

\begin{proof}[Proof of Corollary \ref{cor-degree-hidden-polynomial}]
Note that $\deg P_{B,i,j}(T) \leq \deg P_B(T) = l-1$ for all $(i,j) \in \zed\times \zed$. Since $Q_B(\alpha,T):= \sum_{(i,j)\in \zed\times\zed} (-1)^i\alpha^j P_{B,i,j}(T)$, this implies that $\deg_T Q_B(\alpha,T) \leq l-1$. But, by Lemma \ref{lemma-degree-Hilbert-lower-bound} $\deg_T Q_{B}(1, T) = l-1$. This implies that $\deg_T Q_B(\alpha,T) \geq l-1$. Thus, $\deg_T Q_B(\alpha,T) = l-1$.
\end{proof}

\begin{proof}[Proof of Example \ref{example-2-braids}]
We only prove equations \eqref{eq-2-braids-even} and \eqref{eq-2-braids-odd} for $k \geq 0$. The proof for $k<0$ is very similar and left to the reader.

Note that $B_0$ is the $2$-strand closed braid with no crossings and $B_1$ is the positive stabilization of the $1$-strand closed braid. So, by Lemma \ref{lemma-unlink}, we have that
\begin{eqnarray*}
Q_{B_0}(\alpha,T) & = & \alpha^{-1}(1+\alpha^{-1})T,\\
Q_{B_1}(\alpha,T) & = & \alpha^{-1}.
\end{eqnarray*}
This shows that equations \eqref{eq-2-braids-even} and \eqref{eq-2-braids-odd} are true for $k=0$. 

Now assume that \eqref{eq-2-braids-even} and \eqref{eq-2-braids-odd} are true for a given $k\geq 0$. For $B_n$ with $n>0$, applying the skein relation of $Q_B(\alpha, T)$ at a positive crossing of $B_n$, one gets
\begin{equation}\label{eq-2-braid-skein}
Q_{B_n}(\alpha, T) = \alpha^2 Q_{B_{n-2}}(\alpha, T-1) + \alpha (Q_{B_{n-1}}(\alpha, T) - Q_{B_{n-1}}(\alpha, T-1)).
\end{equation}
For $n=2k+2$, by the induction hypothesis, equation \eqref{eq-2-braid-skein} gives that
\begin{eqnarray*}
Q_{B_{2k+2}}(\alpha, T) & = & \alpha^2 Q_{B_{2k}}(\alpha, T-1) + \alpha (Q_{B_{2k+1}}(\alpha, T) - Q_{B_{2k+1}}(\alpha, T-1)) \\
& = & \alpha^2 Q_{B_{2k}}(\alpha, T-1) = \alpha^{2k+1}(1+\alpha^{-1})(T-k-1).
\end{eqnarray*}
This proves equation \eqref{eq-2-braids-even} for $k+1$. For $n=2k+3$, by the induction hypothesis and the above computation of $Q_{B_{2k+2}}(\alpha, T)$, equation \eqref{eq-2-braid-skein} gives that
\begin{eqnarray*}
Q_{B_{2k+3}}(\alpha, T) & = & \alpha^2 Q_{B_{2k+1}}(\alpha, T-1) + \alpha (Q_{B_{2k+2}}(\alpha, T) - Q_{B_{2k+2}}(\alpha, T-1)) \\
& = & \alpha^2(k\alpha^{2k} +(k+1)\alpha^{2k-1}) + \alpha \cdot \alpha^{2k+1}(1+\alpha^{-1}) \\
& = & (k+1)\alpha^{2k+2} + (k+2)\alpha^{2k+1}.
\end{eqnarray*}
This proves equation \eqref{eq-2-braids-odd} for $k+1$. Hence, equations \eqref{eq-2-braids-even} and \eqref{eq-2-braids-odd} are true for all $k\geq 0$.
\end{proof}


\begin{thebibliography}{99}
\bibliographystyle{plain}  
   \bibitem{Arrondo-notes}
   V. Arrondo
   \emph{Introduction to projective varieties,}
   http://www.mat.ucm.es/$\sim$arrondo/projvar.pdf
  \bibitem{Ben}
   D. Bennequin,
   \textit{Entrelacements et \'{e}quations de Pfaff,}
   Ast\'{e}risque, \textbf{107-108} (1983), 87--161.
  \bibitem{FW}
   J. Franks, R. F. Williams,
   \textit{Braids and the Jones polynomial,}
   Trans. Amer. Math. Soc. \textbf{303} (1987), no. 1, 97--108.
  \bibitem{KR1}
   M. Khovanov, L. Rozansky,
   \emph{Matrix factorizations and link homology,}
   Fund. Math. \textbf{199} (2008), no. 1, 1--91.
  \bibitem{KR2}
   M. Khovanov, L. Rozansky,
   \emph{Matrix factorizations and link homology II,}
   Geom. Topol. \textbf{12} (2008), no. 3, 1387--1425.
  \bibitem{OSh}
   S. Orevkov, V. Shevchishin,
   \textit{Markov theorem for transversal links,}
   J. Knot Theory Ramifications \textbf{12} (2003), no. 7, 905--913.
  \bibitem{Ras-2-bridge}
   J. Rasmussen,
   \textit{Khovanov-Rozansky homology of two-bridge knots and links,}
   Duke Math. Journal \textbf{136} (2007), 551--583
  \bibitem{Wr}
   N. Wrinkle,
   \textit{The Markov Theorem for transverse knots,}
   arXiv:math.GT/0202055.
	\bibitem{Wu-triple-trans}
   H. Wu,
   \emph{A Family of Transverse Link Homologies,}
   Algebr. Geom. Topol. \textbf{16} (2016) 41--127. DOI: 10.2140/agt.2016.16.41
\end{thebibliography}
\end{document}